\newtheorem{theorem}{Theorem}[section]
\newtheorem{definition}[theorem]{Definition}
\newtheorem{corollary}[theorem]{Corollary}
\theoremstyle{definition}
\theoremstyle{remark}
\numberwithin{equation}{section}
\begin{document}
\title[sub-matrix summability of sequences of sets]{ sub-matrix summability of sequence of sets }
\author{\.{I}. Da\v{g}adur}
\address{Mersin University, Faculty of Science and Literature, Department of
Mathematics, 33343 Mersin - TURKEY.}
\email{ilhandagadur@yahoo.com}
\email{seydasezgek@gmail.com}
\author{\c{S}. SEZGEK}
\maketitle

\begin{abstract}
The main purpose of this paper is to introduce the concepts of Wijsman $C_{\lambda}$ statistical convergence, Wijsman $C_{\lambda}$ summability and Wijsman $\mathcal{I}$-$C_{\lambda}$ summability for sequence of sets by using Ces\`{a}ro submethod. Also, we establish some relations among the Wijsman $C_{\lambda}$ summability, Wijsman $D_{\lambda}$ summability and Wijsman $C_{1}$ summability. Finally, we have given some  equivalence results for these concepts.
\end{abstract}

~~  

\textbf{2010 Mathematics Subject Classification:} 40A05, 40A35 %
\newline

\textbf{Keywords and phrases:} $C_{\lambda}$-summability method, set sequences, ideal convergence, statistical convergence%

\section{introduction and notations}

The concept of convergence of sequences of real numbers has been extended to statistical convergence independently by Fast \cite{fast} and Schoenberg \cite{schoenberg}. The idea of $\mathcal{I}$-convergence was introduced by Kostroyko et al. \cite{kostroyko} as a generalization of statistical convergence which is based on the structure of the ideal $\mathcal{I}$ of subset of the set of natural numbers. Recently, Das et al. \cite{das} introduced the notion of  $\mathcal{I}$-statistical convergence by using ideal. 

Connor \cite{connor} gave the relationships between the concepts of strongly $p$-Ces\`{a}ro summability and statistical convergence of sequences. 

The concept of convergence of sequences of numbers has been extended by several authors to convergence of sequences of sets. The one of these such extensions considered in this paper is the concept of Wijsman convergence (see \cite{baronti}, \cite{beer}, \cite{beer2}, \cite{nuray}, \cite{wijsman}, \cite{wijsman2}). Nuray and Rhoades \cite{nuray} studied statistical convergence set sequences and gave some basic theorems. Furthermore, the concept of strongly summable set sequences was given by \cite{nuray}. Wijsman $\mathcal{I}$-convergence by using ideal was introduced by Ki\c{s}i and Nuray \cite{kisi1}.

Now, we recall the basic definitions.

Let $x=\{x_k\}$ be a sequence of complex numbers. Then $x=\{x_k\}$ statistically convergent to $L$ provided that for every $\varepsilon>0$, 
\begin{equation*}
\lim_{n\to\infty}\frac{1}{n}|\{k\leq n : |x_k-L|\geq\varepsilon \}|=0 \ ,
\end{equation*}
where the vertical bars indicate the number of elements in the set \cite{fridy}.

A family of sets $\mathcal{I}\subseteq 2^{\mathbb{N}}$ is called an ideal if and only if 

\textit{i)} $\emptyset\in \mathcal{I}$, \textit{ii)} for each $A, B \in \mathcal{I}$ we have $A\cup B\in \mathcal{I}$, 
\textit{iii)} for each $A \in \mathcal{I}$ and each $B\subseteq A$ we have $B\in \mathcal{I}$.

An ideal is called non-trivial if $\mathbb{N}\in \mathcal{I}$ and non-trivial ideal is called admissible if $\{n\}\in\mathcal{I}$ for each $n\in\mathbb{N}$. 

A family of sets $\mathcal{F}\subseteq2^{\mathbb{N}}$ is a filter if and only if 
	
\textit{i)} $\emptyset\notin \mathcal{F}$, 
\textit{ii)} for each $A, B \in \mathcal{F}$ we have $A\cap B\in \mathcal{F}$, 
\textit{iii)} for each $A \in \mathcal{F}$ and each $A\subseteq B$ we have $B\in \mathcal{F}$.

In \cite{kostroyko}, $\mathcal{I}$ is non-trivial ideal in $\mathbb{N}$ if and only if 
\begin{equation*}
\mathcal{F}(\mathcal{I})=\{ M\subset \mathbb{N} : (\exists A\in\mathcal{I})(M=\mathbb{N}\backslash A)  \}
\end{equation*}
is a filter in $\mathbb{N}$.

Let  $\mathcal{I}\subset 2^{\mathbb{N}}$ be an admissible ideal. A sequence $x=\{x_k\}$ of elements of $\mathbb{R}$ is said to be $\mathcal{I}$-convergent to $L\in \mathbb{R}$ if for every $\varepsilon>0$ the set 
\begin{equation*}
A(\varepsilon)=\{ k\in\mathbb{N}: |x_k-L|\geq\varepsilon  \} \in \mathcal{I} \ .
\end{equation*}

Let $(X,\rho)$ be a metric space. For any point $x\in X$ and any non-empty subset $A$ of $X$, we define the distance from $x$ to $A$ by
\begin{equation*}
d(x,A)=\inf_{a\in A}\rho(x,a) \ .
\end{equation*}

Throughout this paper, we let $(X,\rho)$ be a metric space and $A$, $A_k$ be any non-empty closed subsets of $X$. 

The sequence $\{A_k\}$ is bounded if $\sup_{k}\{ d(x,A_k)  \}<\infty$ for each $x\in X$. The set of all bounded set sequences is denoted by $L_{\infty}$.

The sequence $\{A_k\}$ is Wijsman convergent to $A$ provided for each $x\in X$, we have
\begin{equation*}
\lim_{k\to\infty}d(x,A_k)=d(x,A)  \ . 
\end{equation*}

The sequence $\{A_k\}$ is Wijsman statistical convergent to $A$ if $\{ d(x,A_k) \}$ is statistical convergent to $d(x,A)$; i.e., for every $\varepsilon>0$ and for each $x\in X$, 
\begin{equation*}
\lim_{n\to\infty} \frac{1}{n} \left| \left\{ k\leq n : |d(x,A_k)-d(x,A)|\geq\varepsilon   \right\}  \right|=0     \ . 
\end{equation*}

The sequence $\{A_k\}$ is Wijsman Ces\`{a}ro summable to $A$ if for each $x\in X$,
\begin{equation*}
\lim_{k\to\infty} \frac{1}{n}\sum_{k=1}^{n}  d(x,A_k)=d(x,A)  \ . 
\end{equation*}

The sequence $\{A_k\}$ is Wijsman $\mathcal{I}$-convergent to $A$ if for every $\varepsilon>0$ and for each $x\in X$, 
\begin{equation*}
A(x,\varepsilon)= \left\{ k\in\mathbb{N} : |d(x,A_k)-d(x,A)|\geq\varepsilon   \right\}\in\mathcal{I}  \ .
\end{equation*}

The sequence $\{A_k\}$ is Wijsman $\mathcal{I}$-statistical convergent to $A$ if for every $\varepsilon>0$, $\delta>0$ and for each $x\in X$, 
\begin{equation*}
\left\{ n\in\mathbb{N} :  \frac{1}{n}  \left| \left\{  k\leq n :  |d(x,A_k)-d(x,A)|\geq\varepsilon  \right\}    \right|\geq\delta    \right\}\in\mathcal{I} \ .
\end{equation*}
In this case we write $ A_k \overset{ \mathcal{S}(\mathcal{I}_{\mathcal{W}}) }{\longrightarrow} A $.

In 1932, Agnew \cite{agnew} defined the deferred Ces\`{a}ro mean $D_{p,q}$ of the sequence $x$ by
\begin{equation*}
(D_{p,q}x)_n=\frac{1}{q(n)-p(n)}\sum_{k=p(n)+1}^{q(n)}x_k
\end{equation*} 
where $\{p(n)\}$ and $\{q(n)\}$ are sequences of nonnegative integers satisfying the conditions $p(n)<q(n)$ and $\lim_{n\to\infty}q(n)=\infty$. $D_{p,q}$ is clearly regular for any choise of $\{p(n)\}$ and $\{q(n)\}$.

Let $F$ be an infinite subset of $\mathbb{N}$ and $F$ as the range of a strictly increasing sequence of positive integers, say $F=\{ \lambda(n) \}$. The Ces\`{a}ro submethod $C_{\lambda}$ is defined as 
\begin{equation*}
(C_{\lambda }x)_{n}=\frac{1}{\lambda (n)}\sum_{k=1}^{%
	\lambda (n)}x_{k}  \  \   ~~ (n=1,2,\ldots).
\end{equation*}%
where $\{x_k\}$ is a sequence of a real or complex numbers. Therefore, the $C_{\lambda}$ method yields a subsequence of the  Ces\`{a}ro method $C_1$, and hence it is regular for any $\lambda$. $C_{\lambda}$ is obtained by deleting a set of rows from Ces\`{a}ro matrix. The basic properties of $C_{\lambda}$ method can be found in \cite{armitage},  \cite{osikiewicz} and \cite{goffman} . 

Let	$\lambda=\{ \lambda(n) \}$ be an increasing sequence of $\mathbb{N}$ and $x=\{x_k\}$ be a sequence. Osikiewicz \cite{osikiewicz} defined that $x$ is $C_{\lambda}$ statistical convergent to $L$ if, for $\forall \varepsilon >0$,
\begin{equation*}
\lim_{n\to\infty} \frac{1}{\lambda(n)}\left| \left\{ k\leq \lambda(n) : |x_k-L|\geq \varepsilon \right\} \right|=0   ~~ \  .
\end{equation*}

\section{Equivalence results for Wijsman $C_{\lambda}$ summability }

In the present section we shall give the definitions of Wijsman $C_{\lambda}$ summability and Wijsman $C_{\lambda}$ statistical convergence and examine some equivalence results. Also, we establish some relations between Wijsman $C_{\lambda}$ summability and Wijsman $C_{1}$ summability.

\begin{definition}
Let	$\lambda=\{ \lambda(n) \}$ be an increasing sequence of $\mathbb{N}$ and $\{A_k\}$ be a set sequence.	The sequence $\{A_k\}$ is Wijsman $C_{\lambda}$ summable to $A$ if for each $x\in X$, 
	\begin{equation*}
	\lim_{n\to\infty} \frac{1}{\lambda(n)}\sum_{k=1}^{\lambda(n)}d(x,A_k)=d(x,A)  \  .
	\end{equation*}
	In this case we write  $	A_k \overset{W_{C_{\lambda}} \ }{\ \longrightarrow} A $ .
\end{definition}

\begin{definition}
Let	$\lambda=\{ \lambda(n) \}$ be an increasing sequence of $\mathbb{N}$ with $\lambda(0)=0$ and $\{A_k\}$ be a set sequence.	The sequence $\{A_k\}$ is Wijsman $D_{\lambda}$ summable to $A$ if for each $x\in X$, 
	\begin{equation*}
	\lim_{n\to\infty} \frac{1}{\lambda(n)-\lambda(n-1)}\sum_{k=\lambda(n-1)+1}^{\lambda(n)}d(x,A_k)=d(x,A) \  .
	\end{equation*}
	This is denoted by $ 	A_k \overset{W_{D_{\lambda}} \ }{\ \longrightarrow} A $ .
\end{definition}

\begin{definition}
	Let	$\lambda=\{ \lambda(n) \}$ be an increasing sequence of $\mathbb{N}$ and $\{A_k\}$ be a set sequence. $\{A_k\}$ is Wijsman $C_{\lambda}$ statistical convergent to $A$ if, for $\forall \varepsilon >0$,
	\begin{equation*}
	\lim_{n\to\infty} \frac{1}{\lambda(n)}\left| \left\{ k\leq \lambda(n) : |d(x,A_k-d(x,A))|\geq \varepsilon \right\} \right|=0.
	\end{equation*}
	In this case we write  $ 	A_k \overset{st-(W_{C_{\lambda}}) \ }{\ \longrightarrow  \  } A $ .
\end{definition}

The theorem below gives us equivalence Wijsman ${C_{\lambda}}$ convergence with Wijsman Ces\`{a}ro convergence for bounded sequences.

\begin{theorem}
	\label{c1}
	Let $E=\{\lambda(n)\}$ be an infinite subset of $\mathbb{N}$ and $\{A_k\}$ be a bounded sequence. Then Wijsman ${C_{1}}$ convergence is equivalent to Wijsman ${C_{\lambda}}$ convergence if and only if $\limsup_{n\to \infty} \frac{\lambda(n+1)}{\lambda(n)}=1$.
\end{theorem}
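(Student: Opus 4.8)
The plan is to reduce the set-valued statement to a scalar statement about bounded sequences of reals, applied pointwise in $x\in X$. Fix $x\in X$ and write $a_k=d(x,A_k)$, $L=d(x,A)$; since $\{A_k\}$ is bounded, $\{a_k\}$ is a bounded sequence of reals. Wijsman $C_1$ convergence to $A$ is then exactly $C_1$-summability of $\{a_k\}$ to $L$ for every $x$, and likewise Wijsman $C_\lambda$ convergence is $C_\lambda$-summability of $\{a_k\}$ to $L$ for every $x$. So the theorem will follow once we establish: for a bounded real sequence $\{a_k\}$, the equality $\lim_n (C_\lambda a)_n = \lim_n (C_1 a)_n$ holds for all such sequences (whenever either limit exists) if and only if $\limsup_{n\to\infty}\lambda(n+1)/\lambda(n)=1$. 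One direction is immediate: since $\{(C_\lambda a)_n\}$ is literally the subsequence $\{(C_1 a)_{\lambda(n)}\}$ of $\{(C_1 a)_n\}$, $C_1$-summability always implies $C_\lambda$-summability to the same limit, with no hypothesis on $\lambda$; the content is the converse, i.e. that $C_\lambda$-summability forces $C_1$-summability precisely under the gap condition.

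For the ``if'' direction, assume $\limsup_n \lambda(n+1)/\lambda(n)=1$ and that $(C_\lambda a)_n\to L$. I would estimate, for an arbitrary index $m$, the Ces\`aro mean $\sigma_m=(C_1 a)_m$ by comparing it to $\sigma_{\lambda(n)}$ where $\lambda(n)$ is the largest term of $E$ not exceeding $m$ (so $\lambda(n)\le m<\lambda(n+1)$). Writing $M=\sup_k|a_k|$, the standard splitting gives
\begin{equation*}
\bigl|\sigma_m-\sigma_{\lambda(n)}\bigr|
\le \Bigl|\tfrac{1}{m}\sum_{k=1}^{\lambda(n)}a_k-\tfrac{1}{\lambda(n)}\sum_{k=1}^{\lambda(n)}a_k\Bigr|
+\Bigl|\tfrac{1}{m}\sum_{k=\lambda(n)+1}^{m}a_k\Bigr|
\le M\,\Bigl(1-\tfrac{\lambda(n)}{m}\Bigr)+M\,\tfrac{m-\lambda(n)}{m}
= 2M\Bigl(1-\tfrac{\lambda(n)}{m}\Bigr).
\end{equation*}
Since $\lambda(n)\le m<\lambda(n+1)$ we have $1-\lambda(n)/m \le 1-\lambda(n)/\lambda(n+1)$. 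The hypothesis $\limsup\lambda(n+1)/\lambda(n)=1$ does not by itself send $\lambda(n+1)/\lambda(n)\to 1$, so here I expect the main subtlety: one must argue along the subsequence of indices $n$ where the ratio is close to $1$, and handle the remaining indices by a separate Cauchy-type argument, or — more cleanly — observe that $\limsup = 1$ combined with $\lambda(n+1)/\lambda(n)\ge 1$ does give $\lambda(n+1)/\lambda(n)\to 1$ along a cofinite-in-density set and push the bound through; I would follow whichever of these the subsequent text adopts, the cleanest being to note $\liminf\ge 1$ trivially so $\limsup=1$ forces the full limit $\lambda(n+1)/\lambda(n)\to1$, whence $2M(1-\lambda(n)/m)\to 0$ uniformly as $m\to\infty$ and $\sigma_m\to L$.

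For the ``only if'' direction, assume $\limsup_n \lambda(n+1)/\lambda(n)=c>1$ (allowing $c=\infty$), and construct a bounded sequence $\{A_k\}$ of sets in some metric space — e.g. take $X=\mathbb{R}$ with the usual metric and $A_k$ a singleton $\{a_k\}$, so that $d(x,A_k)=|x-a_k|$ — that is Wijsman $C_\lambda$ summable but not Wijsman $C_1$ summable. Concretely I would choose a $0$--$M$ valued sequence $\{a_k\}$: along a subsequence $n_j$ with $\lambda(n_j+1)/\lambda(n_j)\to c$, set $a_k$ equal to (say) $M$ on the blocks $(\lambda(n_j),\lambda(n_j+1)]$ and $0$ elsewhere, arranging the blocks sparsely enough that $(C_\lambda a)_n=\sigma_{\lambda(n)}\to 0$ while the averages $\sigma_{\lambda(n_j+1)}$ stay bounded away from $0$ (they pick up a block of relative length $\ge (1-1/c)$/something), so $\{\sigma_m\}$ does not converge. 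A little care is needed to make the singleton sets have bounded distance functions uniformly in $x$ — taking all $a_k\in\{0,M\}\subset\mathbb{R}$ makes $\sup_k|x-a_k|\le |x|+M<\infty$ for each fixed $x$, so boundedness holds — and to verify that the constructed sequence indeed fails $C_1$-summability in the Wijsman sense at the point $x=0$ (or any single point suffices, since Wijsman convergence requires the limit at every $x$). The hard part overall is the quantitative block construction in this converse: choosing the block positions and the subsequence $n_j$ so that the $C_\lambda$-averages are genuinely killed while the full Ces\`aro averages oscillate, which is a careful but elementary bookkeeping argument with the partial sums.
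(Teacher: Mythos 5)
Your ``if'' direction is, in substance, the paper's own proof (both follow Osikiewicz): for $\lambda(n)\le m<\lambda(n+1)$ one bounds $|\sigma_m-\sigma_{\lambda(n)}|\le 2M\bigl(1-\lambda(n)/m\bigr)\le 2M\bigl(1-\lambda(n)/\lambda(n+1)\bigr)$, and since $\lambda$ is a strictly increasing sequence of positive integers one has $\liminf_n\lambda(n+1)/\lambda(n)\ge 1$, so $\limsup=1$ really does force $\lambda(n+1)/\lambda(n)\to 1$ and the bound is $o(1)$. The paper writes the same estimate in terms of the complementary subsequence $\mu=\mathbb{N}\setminus E$, but the computation is identical, and your observation that $C_1\Rightarrow C_\lambda$ is trivial (subsequence of the Ces\`aro means) is also how the paper treats that implication. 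So far, so good.

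The genuine gap is in your ``only if'' construction (which, to be fair, the paper does not prove either --- its proof stops after sufficiency). If $a_k=M$ on the whole block $(\lambda(n_j),\lambda(n_j+1)]$ and $0$ elsewhere, then $\sigma_{\lambda(n_j+1)}\ge M\,\frac{\lambda(n_j+1)-\lambda(n_j)}{\lambda(n_j+1)}=M\bigl(1-\lambda(n_j)/\lambda(n_j+1)\bigr)\to M(1-1/c)>0$, and $\lambda(n_j+1)\in E$, so this is itself one of the $C_\lambda$-means: no sparseness of the blocks in $j$ can rescue $(C_\lambda a)_n\to 0$. The block must therefore sit strictly inside a gap with the averages returning to the limit before the next element of $E$; but with \emph{nonnegative} values and limit $0$ this is impossible whenever $\limsup_n\lambda(n+1)/\lambda(n)<\infty$, because $\sum_{k\le m}a_k\le\sum_{k\le\lambda(n+1)}a_k$ gives $\sigma_m\le\frac{\lambda(n+1)}{\lambda(n)}\,\sigma_{\lambda(n+1)}\to 0$. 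Osikiewicz's scalar counterexample essentially uses sign changes (up on the first half of a gap, back down on the second); transplanting it to distance functions is a real obstruction and not bookkeeping, since $d(x,A_k)\ge 0$ must converge in mean to $d(x,A)$ at \emph{every} $x$ simultaneously, in particular to $0$ at points of $A$, and the triangle inequality $d(x_0,A_k)\le\rho(x_0,a)+d(a,A_k)$ (for $a\in A$ nearly realizing $d(x_0,A)$) then controls the one-sided excursions at any $x_0$ by quantities already forced to have Ces\`aro mean $0$. You should either carry out the construction only in the case $\limsup=\infty$ (where a one-sided block at the start of a very long gap does work), or confront the possibility that the stated necessity fails in the Wijsman setting for $1<\limsup<\infty$.
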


\begin{proof}
	We shall apply the same technique found in \cite{osikiewicz}.
	Let $\limsup_{n\to \infty} \frac{\lambda(n+1)}{\lambda(n)}=1$ and $\{A_k\}$ be a bounded sequence. Then $\exists \alpha >0$ such that  $ d(x,A_k)<\alpha $ for all $k$.
		
	The sequence $\{A_k\}$ is Wijsman ${C_{\lambda}}$ summable to $A$.	
	Consider the set $F=\mathbb{N}\backslash E:= {\mu(n)}$. If $F$ is finite, then we can show that Wijsman ${C_{1}}$ convergence is equivalent to Wijsman ${C_{\lambda}}$ convergence; so assume $F$ is infinite. Then there exists an $N$ such that for $n\geq N$, $\mu (n)>\lambda(1)$. Since $E$ and $F$ are disjoint, for $n\geq N$ there exists an integer $m$ such that $\lambda(m)<\mu(n)<\lambda(m+1)$. We write $\mu(n)=\lambda(m)+j$, where $0<j<\lambda(m+1)-\lambda(m)$. Then, for $n>N$,
	\begin{eqnarray}
	&& \left| ({C_{\mu}}A)_n-({C_{\lambda}}A)_m  \right| =  \left| \frac{1}{\mu(n)}\sum_{k=1}^{\mu(n)}d(x,A_k)-\frac{1}{\lambda(m)}\sum_{k=1}^{\lambda(m)}d(x,A_k)\right|  \nonumber \\
	&&=\left| \frac{1}{\lambda(m)+j}\sum_{k=1}^{\lambda(m)+j}d(x,A_k)-\frac{1}{\lambda(m)}\sum_{k=1}^{\lambda(m)}d(x,A_k)\right| \nonumber \\
	&&=\left| \frac{1}{\lambda(m)+j}\sum_{k=1}^{\lambda(m)}d(x,A_k)+\frac{1}{\lambda(m)+j}\sum_{k=\lambda(m)+1}^{\lambda(m)+j}d(x,A_k)-\frac{1}{\lambda(m)}\sum_{k=1}^{\lambda(m)}d(x,A_k)\right| \nonumber \\
	&&=\left| \sum_{k=1}^{\lambda(m)} \left( \frac{1}{\lambda(m)+j}-\frac{1}{\lambda(m)}\right)d(x,A_k)+\frac{1}{\lambda(m)+j}\sum_{k=\lambda(m)+1}^{\lambda(m)+j}d(x,A_k)\right| \nonumber \\
	&&\leq \alpha \sum_{k=1}^{\lambda(m)}\frac{j}{\lambda(m)\lambda(m+j)}+ \alpha \frac{j}{\lambda(m+j)} \nonumber \\
	&&=\alpha\frac{j\lambda(m)}{\lambda(m)\lambda(m+j)}+\alpha\frac{j}{\lambda(m+j)} = 2\alpha \frac{j}{\lambda(m+j)}<2\alpha\frac{j}{\lambda(m)}  \ .\nonumber 
	\end{eqnarray}
	Since $0<j<\lambda(m+1)-\lambda(m)$,
	\begin{eqnarray}
	\left| \left( {C_{\mu}} A\right)_n - \left( {C_{\lambda}} A\right)_m\right|< 2\alpha\frac{j}{\lambda(m)}<2\alpha\frac{\lambda(m+1)-\lambda(m)}{\lambda(m)} =o(1) \ . \nonumber 
	\end{eqnarray}
	Thus, 
	\begin{eqnarray}
	0\leq \left| \left( {C_{\mu}} A\right)_n -d(x,A) \right|&\leq& \left| \left( {C_{\mu}} A\right)_n - \left( {C_{\lambda}} A\right)_m\right|+ \left| \left( {C_{\lambda}} A\right)_m -d(x,A) \right|  \nonumber  \\
	&=&o(1)+ o(1)=o(1)  \ .  \nonumber
	\end{eqnarray}
Therefore the sequence $({C_{1}} A)_n$ may be partitioned into two disjoint subsequences, namely $\left( {C_{\lambda}} A\right)_n=\left( {C_{1}} A\right)_{\lambda(n)}$ and $\left( {C_{\mu}} A\right)_n=\left( {C_{1}} A\right)_{\mu(n)}$, each having the common limit $A$. Thus, the sequence $\{A_k\}$ must be Wijsman $C_{1}$ summable to $A$ and hence Wijsman $C_{1}$ convergence and Wijsman $C_{\lambda}$ convergence equivalent for bounded sequences. 
\end{proof}	

Recall that if $\lim_{n\to \infty} \frac{n}{\lambda(n)}>0$, then $\lim_{n\to \infty}\frac{\lambda(n+1)}{\lambda(n)}=1$  \cite{osikiewicz}. Then the following corollary follows from Theorem \ref{c1} .

\begin{corollary}
	Let $E=\{\lambda(n)\}$ and $F=\{\mu(n)\}$ be infinite subsets of $\mathbb{N}$ with $\lim_{n\to \infty} \frac{n}{\lambda(n)}>0$ and $\lim_{n\to \infty} \frac{n}{\mu(n)}>0$. Then Wijsman $ {C_{\lambda}} $ convergence and Wijsman $ {C_{\mu}} $ convergence are equivalent for bounded sequences. 
\end{corollary}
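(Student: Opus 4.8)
The plan is to reduce both summability methods to Wijsman $C_{1}$ convergence and then invoke transitivity of the equivalence relation. First I would apply the fact recalled immediately before the statement: since $\lim_{n\to\infty}\frac{n}{\lambda(n)}>0$, it follows that $\lim_{n\to\infty}\frac{\lambda(n+1)}{\lambda(n)}=1$, and in particular $\limsup_{n\to\infty}\frac{\lambda(n+1)}{\lambda(n)}=1$. The same reasoning applied to $F=\{\mu(n)\}$ gives $\limsup_{n\to\infty}\frac{\mu(n+1)}{\mu(n)}=1$. This puts both $E$ and $F$ into the situation covered by Theorem \ref{c1}.

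Next I would apply Theorem \ref{c1} twice. The first application, with the infinite subset $E=\{\lambda(n)\}$, shows that for every bounded set sequence $\{A_k\}$, Wijsman $C_{1}$ convergence is equivalent to Wijsman $C_{\lambda}$ convergence (and, by uniqueness of the limit, to the same limit $A$). The second application, with the infinite subset $F=\{\mu(n)\}$, shows likewise that Wijsman $C_{1}$ convergence is equivalent to Wijsman $C_{\mu}$ convergence for bounded set sequences.

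Finally I would chain the two equivalences through Wijsman $C_{1}$ convergence: if $\{A_k\}$ is bounded and Wijsman $C_{\lambda}$ summable to $A$, then by the first equivalence it is Wijsman $C_{1}$ summable to $A$, and by the second equivalence it is Wijsman $C_{\mu}$ summable to $A$; the symmetric argument gives the converse. Hence Wijsman $C_{\lambda}$ convergence and Wijsman $C_{\mu}$ convergence coincide on bounded set sequences. I do not anticipate a real obstacle here, since the corollary is essentially a formal consequence of Theorem \ref{c1} together with the recalled remark; the only points deserving a line of care are verifying that the hypothesis $\lim_{n\to\infty}\frac{n}{\lambda(n)}>0$ genuinely yields the $\limsup=1$ condition required by Theorem \ref{c1} (this is exactly the recalled implication), and observing that "equivalence for bounded sequences" is a transitive relation so that passing through $C_{1}$ is legitimate.
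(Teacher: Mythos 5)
Your proposal is correct and follows exactly the paper's own route: invoke the recalled implication that $\lim_{n\to\infty}\frac{n}{\lambda(n)}>0$ forces $\lim_{n\to\infty}\frac{\lambda(n+1)}{\lambda(n)}=1$ (and likewise for $\mu$), apply Theorem \ref{c1} to each of $E$ and $F$, and chain the two equivalences through Wijsman $C_{1}$ convergence. You merely spell out the transitivity step that the paper leaves implicit.
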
  

\begin{proof}
	$\lim_{n\to \infty}\frac{\lambda(n+1)}{\lambda(n)}=\lim_{n\to \infty}\frac{\mu(n+1)}{\mu(n)}=1$ by hypothesis. Hence by Theorem \ref{c1} Wijsman $ {C_{\lambda}} $ convergence, Wijsman $ {C_{\mu}} $ convergence and Wijsman $ {C_{1}} $ convergence are all equivalent for bounded sequences. 
\end{proof}

The next theorem presents a characterization about the Wijsman $C_{\lambda}$ convergence.

\begin{theorem}
	\label{te2}
	Let $E=\{\lambda(n)\}$ and $F=\{\mu(n)\}$ be infinite subsets of $\mathbb{N}$. If $\lim_{n\to \infty} \frac{\mu(n)}{\lambda(n)}=1$, then Wijsman $ {C_{\lambda}} $ convergence is equivalent to Wijsman $ {C_{\mu}} $ convergence for bounded sequences. 
\end{theorem}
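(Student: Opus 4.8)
The plan is to adapt the splitting estimate from the proof of Theorem \ref{c1} (and of the corresponding result in \cite{osikiewicz}), working pointwise in $x$. Fix $x \in X$. Since $\{A_k\}$ is bounded, choose $\alpha > 0$ with $d(x,A_k) < \alpha$ for all $k$. The whole argument reduces to showing
\[
\left| (C_{\mu} A)_n - (C_{\lambda} A)_n \right| \longrightarrow 0 \qquad (n\to\infty),
\]
because once this holds, the triangle inequality turns $(C_{\lambda} A)_n \to d(x,A)$ into $(C_{\mu} A)_n \to d(x,A)$ and conversely; as $x$ was arbitrary, this is exactly the equivalence of the two Wijsman convergences for bounded sequences.

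To bound the difference I would split on the relation between $\lambda(n)$ and $\mu(n)$, which need not be monotone. For those $n$ with $\lambda(n) \le \mu(n)$, write $\mu(n) = \lambda(n) + j_n$ with $0 \le j_n = \mu(n) - \lambda(n)$, split $\sum_{k=1}^{\mu(n)}$ as $\sum_{k=1}^{\lambda(n)} + \sum_{k=\lambda(n)+1}^{\mu(n)}$ and regroup exactly as in Theorem \ref{c1}, obtaining
\begin{eqnarray*}
\left| (C_{\mu} A)_n - (C_{\lambda} A)_n \right|
&\le& \sum_{k=1}^{\lambda(n)}\left(\frac{1}{\lambda(n)}-\frac{1}{\mu(n)}\right)d(x,A_k) + \frac{1}{\mu(n)}\sum_{k=\lambda(n)+1}^{\mu(n)} d(x,A_k) \\
&\le& \alpha\left(1-\frac{\lambda(n)}{\mu(n)}\right) + \alpha\,\frac{\mu(n)-\lambda(n)}{\mu(n)} = 2\alpha\left(1-\frac{\lambda(n)}{\mu(n)}\right).
\end{eqnarray*}
For the remaining $n$, where $\mu(n) < \lambda(n)$, the symmetric computation (interchange the roles of $\lambda$ and $\mu$) gives $\left| (C_{\mu} A)_n - (C_{\lambda} A)_n \right| \le 2\alpha\bigl(1-\tfrac{\mu(n)}{\lambda(n)}\bigr)$.

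Finally, $\lim_{n\to\infty}\frac{\mu(n)}{\lambda(n)}=1$ forces $\lim_{n\to\infty}\frac{\lambda(n)}{\mu(n)}=1$ as well, so both right-hand bounds above tend to $0$; hence $\left| (C_{\mu} A)_n - (C_{\lambda} A)_n \right| \to 0$ no matter which of the two cases a given $n$ falls into, and the equivalence follows as explained. The only real bookkeeping obstacle is precisely that $\lambda(n)$ and $\mu(n)$ are not assumed comparable, which is dispatched by the case split; boundedness of $\{A_k\}$ is used exactly to keep the "boundary" average $\frac{1}{\mu(n)}\sum_{k=\lambda(n)+1}^{\mu(n)} d(x,A_k)$ (and its symmetric counterpart) controlled by $\alpha$.
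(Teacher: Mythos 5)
Your proposal is correct and is essentially the paper's own argument: the paper handles the lack of comparability between $\lambda(n)$ and $\mu(n)$ by setting $Q(n)=\max\{\lambda(n),\mu(n)\}$ and $q(n)=\min\{\lambda(n),\mu(n)\}$ and deriving the single bound $2\alpha\bigl(1-\tfrac{q(n)}{Q(n)}\bigr)=o(1)$, which is exactly your two-case estimate written without the case split. The decomposition, the use of boundedness, and the concluding triangle-inequality step all coincide with the paper's proof.
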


\begin{proof}
	Again, we shall apply the same technique found in \cite{osikiewicz}.
	Let $\{A_k\}$ be a bounded sequence, then $\exists \alpha >0$ such that  $ d(x,A_k)<\alpha $ for all $k$. Consider the sequences $Q(n)=\max{\{\lambda(n), \mu(n)\}}$, $q(n)=\min{\{\lambda(n), \mu(n)\}}$. Since $\lim_{n\to \infty} \frac{\mu(n)}{\lambda(n)}=1$, $\lim_{n\to \infty} \frac{q(n)}{Q(n)}=1$. Then 
	\begin{eqnarray}
	&& \left| ({C_{\lambda}}A)_n-({C_{\mu}}A)_n\right|=  \left| \frac{1}{\mu(n)}\sum_{k=1}^{\mu(n)}d(x,A_k)-\frac{1}{\lambda(n)}\sum_{k=1}^{\lambda(n)}d(x,A_k)\right|  \nonumber \\
	&&=\left| \frac{1}{Q(n)}\sum_{k=1}^{Q(n)}d(x,A_k)-\frac{1}{q(n)}\sum_{k=1}^{q(n)}d(x,A_k)\right| \nonumber \\
	&&=\left| \frac{1}{Q(n)}\sum_{k=1}^{q(n)}d(x,A_k)+\frac{1}{Q(n)}\sum_{k=q(n)+1}^{Q(n)}d(x,A_k)-\frac{1}{q(n)}\sum_{k=1}^{q(n)}d(x,A_k)\right| \nonumber \\
	&&=\left| \sum_{k=1}^{q(n)} \left( \frac{1}{Q(n)}-\frac{1}{q(n)}\right)d(x,A_k)+\frac{1}{Q(n)}\sum_{k=q(n)+1}^{Q(n)}d(x,A_k)\right| \nonumber \\
	&&\leq \alpha \sum_{k=1}^{q(n)}  \frac{Q(n)-q(n)}{Q(n) q(n)} + \alpha \frac{Q(n)-q(n)}{Q(n)}  \nonumber \\
	&&= 2\alpha \frac{Q(n)-q(n)}{Q(n)}=2\alpha\left( 1-\frac{q(n)}{Q(n)}\right)=o(1) \ . \nonumber
	\end{eqnarray}	
	Hence if $\{A_k\}$ is Wijsman ${C_{\lambda}}$ summable to $A$,
	\begin{eqnarray}
	0\leq \left| \left( {C_{\mu}} A\right)_n -d(x,A) \right|&\leq& \left| \left( {C_{\mu}} A\right)_n - \left( {C_{\lambda}} A\right)_n\right|+ \left| \left( {C_{\lambda}} A\right)_n -d(x,A) \right|  \nonumber  \\
	&=&o(1)+ o(1)=o(1)  \ .  \nonumber
	\end{eqnarray}
	Similarly if $\{A_k\}$ is Wijsman ${C_{\mu}}$ summable to $A$,
	\begin{eqnarray}
	0\leq \left| \left( {C_{\lambda}} A\right)_n -d(x,A) \right|&\leq& \left| \left( {C_{\lambda}} A\right)_n - \left( {C_{\mu}} A\right)_n\right|+ \left| \left( {C_{\mu}} A\right)_n -d(x,A) \right|  \nonumber  \\
	&=&o(1)+ o(1)=o(1)  \ .  \nonumber
	\end{eqnarray}
	This proves the result.
\end{proof}

To see that $\lim_{n\to \infty} \frac{\mu(n)}{\lambda(n)}=1$ is not necessary condition in Theorem \ref{te2}, simply consider the sequences $\lambda(n)=n^2$, $\mu(n)=n^3$. Then $\lim_{n\to \infty} \frac{\lambda(n+1)}{\lambda(n)}=\lim_{n\to \infty} \frac{\mu(n+1)}{\mu(n)}=1$, and hence, by Theorem \ref{c1}, Wijsman $ {C_{\lambda}}$ convergence, Wijsman $ {C_{\mu}}$ convergence, and Wijsman $ {C_{1}}$ convergence are all equivalent for bounded sequences. However, $\lim_{n\to \infty} \frac{\mu(n)}{\lambda(n)}\neq1$.

In the Theorem \ref{c1}, with $\limsup_{n}\frac{\lambda(n+1)}{\lambda(n)}=1$ replaced by $\lim_{n}\frac{\lambda(n+1)}{\lambda(n)}=1$, the following result is easily obtained by Theorem \ref{te2}.

We now examine inclusion relationship between Wijsman $C_{\lambda}$ convergence and Wijsman $D_{\lambda}$ convergence.

\begin{theorem}
	Let $E=\{\lambda(n)\}$ be an infinite subset of $\mathbb{N}$ with $\lambda(0)=0$. Then 
	\begin{equation*}
  A_k \overset{W_{D_{\lambda}} \ }{\ \longrightarrow} A \  \Rightarrow \  	A_k\overset{W_{C_{\lambda}} \ }{\ \longrightarrow} A \ .
	\end{equation*} 	
\end{theorem}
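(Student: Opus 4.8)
The plan is to exhibit the $C_{\lambda}$-mean as a genuine convex combination of the $D_{\lambda}$-means and then run the standard regular-weighted-average (Silverman--Toeplitz) argument specialized to that situation. Fix $x\in X$ and write $a_k=d(x,A_k)$ and $\ell=d(x,A)$ for brevity. First I would partition $\{1,2,\dots,\lambda(n)\}$ into the consecutive blocks $\{\lambda(j-1)+1,\dots,\lambda(j)\}$, $j=1,\dots,n$; this is exactly the place where the assumption $\lambda(0)=0$ is used, since it guarantees the blocks start at $k=1$ and exhaust $\{1,\dots,\lambda(n)\}$. Summing block by block gives
\begin{equation*}
\frac{1}{\lambda(n)}\sum_{k=1}^{\lambda(n)}a_k=\sum_{j=1}^{n}w_{n,j}\,(D_{\lambda}A)_j,\qquad w_{n,j}:=\frac{\lambda(j)-\lambda(j-1)}{\lambda(n)},
\end{equation*}
where $(D_{\lambda}A)_j=\frac{1}{\lambda(j)-\lambda(j-1)}\sum_{k=\lambda(j-1)+1}^{\lambda(j)}a_k$. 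The weights $w_{n,j}$ are nonnegative, and, again because $\lambda(0)=0$, they telescope:
\begin{equation*}
\sum_{j=1}^{n}w_{n,j}=\frac{\lambda(n)-\lambda(0)}{\lambda(n)}=1.
\end{equation*}

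Next, let $\varepsilon>0$. By hypothesis $(D_{\lambda}A)_j\to\ell$, so I would choose $N$ with $|(D_{\lambda}A)_j-\ell|<\varepsilon$ for all $j>N$. Using $\sum_j w_{n,j}=1$,
\begin{equation*}
\bigl|(C_{\lambda}A)_n-\ell\bigr|=\Bigl|\sum_{j=1}^{n}w_{n,j}\bigl((D_{\lambda}A)_j-\ell\bigr)\Bigr|\le\sum_{j=1}^{N}w_{n,j}\,\bigl|(D_{\lambda}A)_j-\ell\bigr|+\varepsilon\sum_{j>N}w_{n,j}\le\sum_{j=1}^{N}w_{n,j}\,\bigl|(D_{\lambda}A)_j-\ell\bigr|+\varepsilon.
\end{equation*}
For the finite head, each $|(D_{\lambda}A)_j-\ell|$ is a fixed constant while $w_{n,j}=(\lambda(j)-\lambda(j-1))/\lambda(n)\to 0$ as $n\to\infty$, because $\lambda$ is a strictly increasing sequence of natural numbers and hence $\lambda(n)\to\infty$. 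Therefore the head tends to $0$, giving $\limsup_{n\to\infty}|(C_{\lambda}A)_n-\ell|\le\varepsilon$; letting $\varepsilon\downarrow 0$ yields $(C_{\lambda}A)_n\to\ell=d(x,A)$. Since $x\in X$ was arbitrary, $\{A_k\}$ is Wijsman $C_{\lambda}$ summable to $A$.

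There is no serious obstacle in this argument; it is the Toeplitz mechanism applied to an honest convex combination. The only points requiring attention are (i) invoking $\lambda(0)=0$ so that the block decomposition both covers $\{1,\dots,\lambda(n)\}$ and makes the weights sum to exactly $1$, and (ii) using $\lambda(n)\to\infty$ to annihilate the finitely many head terms. It is also worth noting that, in contrast with the earlier equivalence theorems of this section, no boundedness assumption on $\{A_k\}$ is needed here, since the coefficients $w_{n,j}$ are already nonnegative with row sums $1$.
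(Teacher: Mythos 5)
Your proof is correct and follows essentially the same route as the paper: both express $(C_{\lambda}A)_n$ as the convex combination $\sum_{j=1}^{n}\frac{\lambda(j)-\lambda(j-1)}{\lambda(n)}(D_{\lambda}A)_j$ and conclude by regularity of the associated weight matrix. The only difference is that the paper simply asserts the matrix $T$ is regular and cites limit preservation, whereas you verify the Toeplitz conditions (nonnegativity, unit row sums, vanishing columns) and carry out the $\varepsilon$-argument explicitly, which is a cleaner and more self-contained rendering of the same idea.
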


\begin{proof}
	Let $A_k \overset{W_{D_{\lambda}} \ }{\ \longrightarrow} A$. Then, for any $n$, 
	\begin{eqnarray}
	&&( {C_{\lambda}}A)_n = \frac{1}{\lambda(n)}\sum_{k=1}^{\lambda(n)}|d(x,A_k)-d(x,A)| \nonumber \\
	&=& \frac{1}{\lambda(n)}\left(\sum_{k=1}^{\lambda(1)}|d(x,A_k)-d(x,A)|+\sum_{k=\lambda(1)}^{\lambda(2)}|d(x,A_k)-d(x,A)|\right. \nonumber \\
	&&\left.+\ldots + \sum_{k=\lambda(n-1)}^{\lambda(n)}|d(x,A_k)-d(x,A)|\right)  \nonumber \\
	&=& \frac{\lambda(1)-\lambda(0)}{\lambda(n)}\left(\frac{1}{\lambda(1)-\lambda(0)}\sum_{k=1}^{\lambda(1)}|d(x,A_k)-d(x,A)| \right)\nonumber \\
	&&+\frac{\lambda(2)-\lambda(1)}{\lambda(n)}\left(\frac{1}{\lambda(2)-\lambda(1)}\sum_{k=\lambda(1)}^{\lambda(2)}|d(x,A_k)-d(x,A)|\right) \nonumber \\
	&&+\ldots + \frac{\lambda(n)-\lambda(n-1)}{\lambda(n)}\left(\frac{1}{\lambda(n)-\lambda(n-1)}\sum_{k=\lambda(n-1)}^{\lambda(n)}|d(x,A_k)-d(x,A)|\right)  \nonumber \\
	&=& \frac{\lambda(1)-\lambda(0)}{\lambda(n)}({D_{\lambda}}A)_1+ \frac{\lambda(2)-\lambda(1)}{\lambda(n)}({D_{\lambda}}A)_2+\ldots +\frac{\lambda(n)-\lambda(n-1)}{\lambda(n)}({D_{\lambda}}A)_n \nonumber 
	\end{eqnarray}
	Let $T=(t_{nk})$ be the matrix defined by 
	\begin{equation*}
	t_{nk}=\left\{  \begin{array}{ccc}
	\frac{\lambda(k)-\lambda(k-1)}{\lambda(n)} &, & \text{for } k=1,2,\ldots  \\
	0 &,& \text{otherwise}  \  .
	\end{array}
	\right.
	\end{equation*}
	Clearly, $T$ is regular and we see that $( {C_{\lambda}}A)_n=(T({D_{\lambda}}A))_n$ . Since $\lim_{n\to\infty} ({D_{\lambda}}A)_n=d(x,A)$ and $T$ is regular, $\lim_{n\to\infty} (T({D_{\lambda}}A))_n=d(x,A)$. Hence, $\lim_{n\to\infty} ({C_{\lambda}}A)_n=d(x,A)$. This completes proof. 	
\end{proof}

\begin{theorem}
	Let $E=\{\lambda(n)\}$ be an infinite subset of $\mathbb{N}$ with $\lambda(0)=0$. Then
	\begin{equation*}
	A_k \overset{W_{C_{\lambda}} \ }{\ \longrightarrow} A \  \Rightarrow \  	A_k \overset{W_{D_{\lambda}} \ }{\ \longrightarrow} A \ 
	\end{equation*} 
	if and only if 	$\liminf_{n\to \infty} \frac{\lambda(n)}{\lambda(n-1)}>1$ .
\end{theorem}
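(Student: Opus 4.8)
The plan is to mirror the classical Schaefer/Osikiewicz argument relating a Nörlund-type mean to its "deferred" companion, but carried out pointwise in $x\in X$ for the Wijsman distances $d(x,A_k)$. Throughout, fix $x\in X$ and write $a_k=|d(x,A_k)-d(x,A)|$, so that Wijsman $C_\lambda$ convergence to $A$ means $(C_\lambda A)_n\to d(x,A)$ for each $x$, which (after the same telescoping rearrangement used in the previous theorem, where $T=(t_{nk})$ with $t_{nk}=\frac{\lambda(k)-\lambda(k-1)}{\lambda(n)}$ satisfies $C_\lambda A = T(D_\lambda A)$) is what we must compare with $(D_\lambda A)_n\to d(x,A)$.

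For the sufficiency direction, assume $\liminf_{n\to\infty}\frac{\lambda(n)}{\lambda(n-1)}>1$; pick $\beta>1$ and $N$ with $\lambda(n)\geq\beta\,\lambda(n-1)$ for $n\geq N$. First I would record the algebraic identity
\begin{equation*}
(D_\lambda A)_n=\frac{\lambda(n)}{\lambda(n)-\lambda(n-1)}(C_\lambda A)_n-\frac{\lambda(n-1)}{\lambda(n)-\lambda(n-1)}(C_\lambda A)_{n-1},
\end{equation*}
which is immediate once one writes $\sum_{k=\lambda(n-1)+1}^{\lambda(n)}d(x,A_k)=\lambda(n)(C_\lambda A)_n-\lambda(n-1)(C_\lambda A)_{n-1}$. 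The hypothesis gives $\frac{\lambda(n)}{\lambda(n)-\lambda(n-1)}\leq\frac{\beta}{\beta-1}$ and $\frac{\lambda(n-1)}{\lambda(n)-\lambda(n-1)}\leq\frac{1}{\beta-1}$ for $n\geq N$, so the two coefficients are bounded; since $(C_\lambda A)_n\to d(x,A)$ and $(C_\lambda A)_{n-1}\to d(x,A)$, and $\frac{\lambda(n)}{\lambda(n)-\lambda(n-1)}-\frac{\lambda(n-1)}{\lambda(n)-\lambda(n-1)}=1$, a routine $\varepsilon$-estimate gives $(D_\lambda A)_n\to d(x,A)$. As this holds for every $x\in X$, we get $A_k\overset{W_{D_\lambda}}{\longrightarrow}A$.

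For the necessity direction, I would argue by contraposition: suppose $\liminf_{n\to\infty}\frac{\lambda(n)}{\lambda(n-1)}=1$, and construct a bounded set sequence (it suffices to work in $X=\mathbb{R}$ with $\rho$ the usual metric, taking $A_k$ to be singletons $\{a_k\}$ with $0\le a_k\le 1$, so $d(x,A_k)=|x-a_k|$) that is Wijsman $C_\lambda$ summable but not Wijsman $D_\lambda$ summable. Choosing a subsequence $n_j$ along which $\frac{\lambda(n_j)}{\lambda(n_j-1)}\to1$, I would put the "mass" of the sequence on the blocks $(\lambda(n_j-1),\lambda(n_j)]$: set $a_k=1$ for $k$ in these blocks and $a_k=0$ elsewhere. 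Because the blocks are short relative to $\lambda(n_j)$ (their length $\lambda(n_j)-\lambda(n_j-1)$ is $o(\lambda(n_j))$) and sparse, the $C_\lambda$-averages of $|x-a_k|$ still converge to $|x-0|=d(x,\{0\})$ for each $x$, so the sequence is $W_{C_\lambda}$-summable to $\{0\}$; but along $n=n_j$ the $D_\lambda$-average is identically $1$, so it cannot be $W_{D_\lambda}$-summable to $\{0\}$. One must check the $C_\lambda$-limit claim carefully (the sparseness of the indices $n_j$ can be arranged freely, since $\liminf=1$ only forces infinitely many near-equalities, not a density condition), and this bookkeeping — ensuring the chosen blocks are simultaneously short, sparse, and nontrivial — is the main obstacle; everything else is the bounded-coefficient estimate above.
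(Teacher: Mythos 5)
Your sufficiency argument is essentially the paper's: both rest on the identity $(D_\lambda A)_n=\frac{\lambda(n)}{\lambda(n)-\lambda(n-1)}(C_\lambda A)_n-\frac{\lambda(n-1)}{\lambda(n)-\lambda(n-1)}(C_\lambda A)_{n-1}$; the paper packages this as a two-diagonal matrix transform $R$ and appeals to regularity, while you run the $\varepsilon$-estimate directly from the bounded coefficients that differ by $1$ --- same content, different wrapping. Where you genuinely diverge is the necessity half. The paper handles it in one line by asserting that the implication holds if and only if $R$ is regular, which holds iff the absolute row sums $\frac{\lambda(n)+\lambda(n-1)}{\lambda(n)-\lambda(n-1)}=1+\frac{2}{\lambda(n)/\lambda(n-1)-1}$ stay bounded; this leans on the Silverman--Toeplitz necessity and, strictly speaking, does not check that the convergent sequence witnessing non-regularity is realizable as $(C_\lambda A)_n$ for an actual set sequence. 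Your contrapositive construction --- singletons $\{a_k\}$ in $\mathbb{R}$ with $a_k=1$ on sparse short blocks $(\lambda(n_j-1),\lambda(n_j)]$ and $a_k=0$ elsewhere --- produces exactly such a witness, so your route is the more self-contained one, \emph{provided} you actually carry out the bookkeeping you flagged. It does go through: choose $n_j$ recursively with $\frac{\lambda(n_j)}{\lambda(n_j-1)}<1+\frac{1}{j}$ and $\lambda(n_j-1)>j\,\lambda(n_{j-1})$ (possible because $\liminf=1$ supplies infinitely many admissible indices above any threshold); then the block set $S$ satisfies $|S\cap[1,\lambda(m)]|/\lambda(m)\to 0$ for \emph{all} $m$, whence $(C_\lambda A)_m\to|x|=d(x,\{0\})$ for every $x$, while $(D_\lambda A)_{n_j}=|x-1|$, which at $x=0$ equals $1\neq 0=d(0,\{0\})$. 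Until that recursive choice is written down, your necessity direction is a plan rather than a proof, but the plan is sound and, once completed, arguably tightens the published argument.
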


\begin{proof}
	Let $	A_k \overset{W_{C_{\lambda}} \ }{\ \longrightarrow} A $. Then, for any $n$,
	\begin{eqnarray}
	&&({D_{\lambda}}A)_n =   \frac{1}{\lambda(n)-\lambda(n-1)}\sum_{k=\lambda(n-1)+1}^{\lambda(n)}d(x,A_k)           \nonumber \\
	&=& \frac{\lambda(n)}{\lambda(n)-\lambda(n-1)} \left(\frac{1}{\lambda(n)}\sum_{k=1}^{\lambda(n)}d(x,A_k) \right) \nonumber \\
	&&-\frac{\lambda(n-1)}{\lambda(n)-\lambda(n-1)} \left( \frac{1}{\lambda(n-1)}\sum_{k=1}^{\lambda(n-1)}d(x,A_k) \right) \nonumber  \\
	&=& \frac{\lambda(n)}{\lambda(n)-\lambda(n-1)} \left({C_{\lambda}} \right)_n 
	-\frac{\lambda(n-1)}{\lambda(n)-\lambda(n-1)} \left( {C_{\lambda}}  \right)_{n-1}  \ . \nonumber 
	\end{eqnarray}
	Let $R=(r_{nk})$ be the matrix defined by 
	\begin{equation*}
	r_{nk}=\left\{  \begin{array}{ccc}
	\frac{\lambda(n)}{\lambda(n)-\lambda(n-1)} &, & k=n  \\
	\frac{\lambda(n-1)}{\lambda(n)-\lambda(n-1)} &,& k=n-1  \\
	0 &,& \text{otherwise}  \  .
	\end{array}
	\right.
	\end{equation*}
	Thus, $( {D_{\lambda}}A)_n=(R({C_{\lambda}}A))_n$ and hence $
A_k \overset{W_{C_{\lambda}} \ }{\ \longrightarrow} A \  \Rightarrow \  	A_k \overset{W_{D_{\lambda}} \ }{\ \longrightarrow} A $
	if and only if 	$R$ is regular. $R$ will be regular if and only if the sequence 
	\begin{equation*}
	r_{nk}=\left\{ 
	\frac{\lambda(n)}{\lambda(n)-\lambda(n-1)}+\frac{\lambda(n-1)}{\lambda(n)-\lambda(n-1)} \right\}
	\end{equation*}
	is bounded. But, 
	\begin{eqnarray}
	\frac{\lambda(n)+\lambda(n-1)}{\lambda(n)-\lambda(n-1)}&=&1+\frac{2\lambda(n-1)}{\lambda(n)-\lambda(n-1)}  \nonumber \\
	&=& 1+\frac{2}{\frac{\lambda(n)}{\lambda(n-1)}-1} \nonumber 
	\end{eqnarray}
	and the last expression is bounded if and only if  $\liminf_{n\to \infty} \frac{\lambda(n)}{\lambda(n-1)}>1$ . Hence $A_k \overset{W_{C_{\lambda}} \ }{\ \longrightarrow} A \  \Rightarrow \  	A_k \overset{W_{D_{\lambda}} \ }{\ \longrightarrow} A$.
\end{proof}

Since Wijsman $C_{1}$ convergence implies Wijsman $C_{\lambda}$ convergence for any sequence $\{\lambda(n)\}$, we immediately have the following theorem.

\begin{theorem}
	Let $E=\{\lambda(n)\}$ be an infinite subset of $\mathbb{N}$. If $\{A_k\}$ is Wijsman statistical convergence to $A$, then $\{A_k\}$ is Wijsman $C_{\lambda}$ statistical convergence to $A$.
\end{theorem}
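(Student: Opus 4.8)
The plan is to recognize that the definition of Wijsman $C_{\lambda}$ statistical convergence is nothing more than the defining limit of Wijsman statistical convergence read along the index subsequence $\{\lambda(n)\}$, exactly as the $C_{\lambda}$ matrix is obtained by deleting rows from the $C_1$ matrix. Fix $x\in X$ and $\varepsilon>0$, and for $m\in\mathbb{N}$ put
\[
\varphi(m)=\frac{1}{m}\left|\left\{ k\leq m : |d(x,A_k)-d(x,A)|\geq\varepsilon \right\}\right|.
\]
The hypothesis that $\{A_k\}$ is Wijsman statistically convergent to $A$ says precisely that $\varphi(m)\to 0$ as $m\to\infty$, for this particular $x$ and $\varepsilon$ (and for all such choices).

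Next I would invoke the structural fact that $E=\{\lambda(n)\}$ is, by definition, the range of a strictly increasing sequence of positive integers, so that $\lambda(n)\geq n$ for every $n$, and in particular $\lambda(n)\to\infty$. Hence $\{\varphi(\lambda(n))\}_{n}$ is a subsequence of the null sequence $\{\varphi(m)\}_{m}$, so it converges to the same limit, namely $\varphi(\lambda(n))\to 0$ as $n\to\infty$. Since
\[
\varphi(\lambda(n))=\frac{1}{\lambda(n)}\left|\left\{ k\leq\lambda(n) : |d(x,A_k)-d(x,A)|\geq\varepsilon \right\}\right|,
\]
this is exactly the statement that $\{A_k\}$ is Wijsman $C_{\lambda}$ statistically convergent to $A$. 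As $x\in X$ and $\varepsilon>0$ were arbitrary, the proof is complete.

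I do not expect any genuine obstacle: the only nontrivial ingredient is the elementary fact that every subsequence of a convergent real sequence has the same limit, together with the observation that $\lambda(n)\to\infty$ because $\lambda$ is strictly increasing on $\mathbb{N}$. The argument could be compressed into a single sentence, but I would introduce the auxiliary quantity $\varphi$ so that the passage to the subsequence is completely transparent and the parallel with the row-deletion description of $C_{\lambda}$ is made explicit.
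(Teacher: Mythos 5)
Your proof is correct and takes essentially the same approach as the paper, which treats this as an immediate consequence of the fact that the $C_{\lambda}$ density expression is just the ordinary density expression read along the subsequence $\{\lambda(n)\}$. The explicit introduction of $\varphi$ and the observation that $\lambda(n)\to\infty$ merely make transparent what the paper leaves implicit.
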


Theorem \ref{c1} immediately yields the following theorem
\begin{theorem}
	Let $E=\{\lambda(n)\}$ be an infinite subset of $\mathbb{N}$ and $\{A_k\}$ be a bounded sequence. If $\limsup_{n\to \infty} \frac{\lambda(n+1)}{\lambda(n)}=1$, then Wijsman statistical convergence is equivalent to Wijsman $C_{\lambda}$ statistical convergence. 
\end{theorem}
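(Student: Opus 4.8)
The plan is to obtain the statement as a direct consequence of Theorem~\ref{c1} by recasting statistical convergence in terms of ordinary Ces\`aro summability of an indicator sequence. Fix $x\in X$ and $\varepsilon>0$, and define a scalar sequence $\{g_k\}=\{g_k(x,\varepsilon)\}$ by $g_k=1$ when $|d(x,A_k)-d(x,A)|\geq\varepsilon$ and $g_k=0$ otherwise. Then
\begin{equation*}
\frac{1}{n}\sum_{k=1}^{n}g_k=\frac{1}{n}\bigl|\{k\leq n:|d(x,A_k)-d(x,A)|\geq\varepsilon\}\bigr|,
\end{equation*}
and replacing $n$ by $\lambda(n)$ in the left-hand average gives the corresponding quantity with $k\leq\lambda(n)$. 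Consequently, $\{A_k\}$ is Wijsman statistically convergent to $A$ if and only if, for every $x$ and every $\varepsilon>0$, $\{g_k\}$ is $C_1$-summable to $0$; and $\{A_k\}$ is Wijsman $C_\lambda$ statistically convergent to $A$ if and only if, for every $x$ and every $\varepsilon>0$, $\{g_k\}$ is $C_\lambda$-summable to $0$.

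To bring Theorem~\ref{c1} to bear I would realize $\{g_k\}$ as a bounded closed set sequence in the metric space $(\mathbb{R},|\cdot|)$: put $G_k=\{g_k\}$ and $G=\{0\}$. Since $0\leq g_k\leq 1$ the sequence $\{G_k\}$ is bounded, and because $d(y,G_k)=|y-g_k|$ and $d(y,G)=|y|$, a short computation shows
\begin{equation*}
\frac{1}{n}\sum_{k=1}^{n}d(y,G_k)-d(y,G)=\bigl(|y-1|-|y|\bigr)\,\frac{1}{n}\sum_{k=1}^{n}g_k ,
\end{equation*}
whence $\{G_k\}$ is Wijsman $C_1$ summable to $G$ exactly when $\frac{1}{n}\sum_{k=1}^{n}g_k\to0$, and likewise $\{G_k\}$ is Wijsman $C_\lambda$ summable to $G$ exactly when $\frac{1}{\lambda(n)}\sum_{k=1}^{\lambda(n)}g_k\to0$ (the coefficient $|y-1|-|y|$ vanishes only at $y=\tfrac12$, which contributes no constraint). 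Alternatively one may simply note that the argument in the proof of Theorem~\ref{c1} uses only the boundedness of the scalar sequence $\{d(x,A_k)\}$ and therefore applies verbatim with $\{g_k\}$ in its place.

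Now apply Theorem~\ref{c1} to the bounded set sequence $\{G_k\}$. Since $\limsup_{n\to\infty}\lambda(n+1)/\lambda(n)=1$ by hypothesis, Wijsman $C_1$ convergence and Wijsman $C_\lambda$ convergence coincide for $\{G_k\}$, that is,
\begin{equation*}
\frac{1}{n}\sum_{k=1}^{n}g_k\to0\quad\Longleftrightarrow\quad\frac{1}{\lambda(n)}\sum_{k=1}^{\lambda(n)}g_k\to0 .
\end{equation*}
As $x\in X$ and $\varepsilon>0$ were arbitrary, combining this with the reformulation of the first paragraph shows that $\{A_k\}$ is Wijsman statistically convergent to $A$ if and only if it is Wijsman $C_\lambda$ statistically convergent to $A$, which is the assertion.

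The only delicate point is the second step: Theorem~\ref{c1} is phrased for set sequences, while the object naturally produced by statistical convergence is the scalar indicator sequence $\{g_k\}$. The singleton embedding $G_k=\{g_k\}$ settles this, but one must check that $\{G_k\}$ is genuinely bounded and closed and that its Wijsman $C_1$ and $C_\lambda$ behaviour is faithfully controlled by the Ces\`aro means of $\{g_k\}$; both points are immediate because $g_k\in\{0,1\}$. Everything else is routine bookkeeping of the quantifiers over $x$ and $\varepsilon$.
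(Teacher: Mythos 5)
Your proposal is correct and follows the paper's intended route: the paper gives no written proof beyond the remark that Theorem~\ref{c1} immediately yields the result, and your reduction to the indicator sequence $\{g_k\}$ (via the singleton embedding $G_k=\{g_k\}$, or equivalently by observing that the proof of Theorem~\ref{c1} uses only boundedness of the scalar sequence and so applies verbatim to $\{g_k\}$) is precisely the argument being elided. As a small bonus, your version shows the hypothesis that $\{A_k\}$ be bounded is superfluous here, since the $0$--$1$ indicator sequence is bounded regardless.
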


\section{ideal sub-matrix summability of sequences of sets}

In this section, the concepts of Wijsman  $\mathcal{I}$-$C_{\lambda}$ summability, Wijsman strongly $\mathcal{I}$-$C_{\lambda}$ summability and Wijsman strongly $\mathcal{I}$-$D_{\lambda}$ summability for sequence of sets are defined, and several theorems on this subjects are given.

\begin{definition}
Let	$\lambda=\{ \lambda(n) \}$ be an increasing sequence of $\mathbb{N}$ and $\{A_k\}$ be a set sequence.	The sequence $\{A_k\}$ is Wijsman  $\mathcal{I}$-$C_{\lambda}$ summable to $A$ if for every $\varepsilon >0$ and for each $x\in X$, 
	\begin{equation*}
	\left\{ n\in \mathbb{N} : \left|\frac{1}{\lambda(n)}\sum_{k=1}^{\lambda(n)}d(x,A_k)-d(x,A)\right|\geq \varepsilon  \right\}\in \mathcal{I} \  .
	\end{equation*}
	This is denoted by $	A_k \overset{C_{\lambda}(\mathcal{I}_W) }{ \longrightarrow} A$.
\end{definition}

\begin{definition}
Let	$\lambda=\{ \lambda(n) \}$ be an increasing sequence of $\mathbb{N}$ and $\{A_k\}$ be a set sequence.	The sequence $\{A_k\}$ is Wijsman strongly $\mathcal{I}$-$C_{\lambda}$ summable to $A$ if for every $\varepsilon >0$ and for each $x\in X$, 
	\begin{equation*}
	\left\{ n\in \mathbb{N} : \frac{1}{\lambda(n)}\sum_{k=1}^{\lambda(n)}\left|d(x,A_k)-d(x,A)\right|\geq \varepsilon  \right\}\in \mathcal{I} \  .
	\end{equation*}
In this case we write 	$	A_k \overset{C_{\lambda}[\mathcal{I}_W] }{ \longrightarrow} A$ . 
\end{definition}

\begin{definition}
Let	$\lambda=\{ \lambda(n) \}$ be an increasing sequence of $\mathbb{N}$ with $\lambda(0)=0$ and $\{A_k\}$ be a set sequence.	The sequence $\{A_k\}$ is Wijsman strongly $\mathcal{I}$-$D_{\lambda}$ summable to $A$ if for each $x\in X$, 
	\begin{equation*}
	\left\{ n\in \mathbb{N}: \frac{1}{\lambda(n)-\lambda(n-1)}\sum_{k=\lambda(n-1)+1}^{\lambda(n)}|d(x,A_k)-d(x,A)|\geq \varepsilon  \right\} \in \mathcal{I}  \  .
	\end{equation*}
In this case we write	$	A_k \overset{D_{\lambda}[\mathcal{I}_W] }{ \longrightarrow} A$ .
\end{definition}

\begin{theorem}
	Let $E=\{\lambda(n)\}$ be an infinite subset of $\mathbb{N}$ with $\lambda(0)=0$. If $\liminf_n\frac{\lambda(n)}{\lambda(n-1)}>1$, then 
	\begin{equation*}
	A_k \overset{C_{\lambda}[\mathcal{I}_W] }{ \longrightarrow} A \  \Rightarrow \  	A_k \overset{D_{\lambda}[\mathcal{I}_W] }{\ \longrightarrow} A \ .	
	\end{equation*}
\end{theorem}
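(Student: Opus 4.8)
The plan is to fix $x\in X$ and work with the nonnegative sequence $a_k:=|d(x,A_k)-d(x,A)|$. Write $t_n:=\frac{1}{\lambda(n)}\sum_{k=1}^{\lambda(n)}a_k$ for its $C_{\lambda}$-mean and $s_n:=\frac{1}{\lambda(n)-\lambda(n-1)}\sum_{k=\lambda(n-1)+1}^{\lambda(n)}a_k$ for its $D_{\lambda}$-mean. The hypothesis $A_k\overset{C_{\lambda}[\mathcal{I}_W]}{\longrightarrow}A$ says precisely that $\{n:t_n\geq\varepsilon\}\in\mathcal{I}$ for every $\varepsilon>0$ and every $x$, and we must produce $\{n:s_n\geq\varepsilon\}\in\mathcal{I}$.

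The key observation is that, because all the $a_k$ are nonnegative, the block sum $\sum_{k=\lambda(n-1)+1}^{\lambda(n)}a_k$ is dominated by the full sum $\sum_{k=1}^{\lambda(n)}a_k=\lambda(n)\,t_n$, so that
\begin{equation*}
0\le s_n\le\frac{\lambda(n)}{\lambda(n)-\lambda(n-1)}\,t_n .
\end{equation*}
This sidesteps the subtractive identity $s_n=\frac{\lambda(n)}{\lambda(n)-\lambda(n-1)}t_n-\frac{\lambda(n-1)}{\lambda(n)-\lambda(n-1)}t_{n-1}$ used in the non-statistical comparison theorem; that identity is inconvenient here because $\mathcal{I}$-convergence need not be preserved by an arbitrary regular matrix, whereas the one-sided domination above is all we need.

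Next I would convert the hypothesis $\liminf_n\frac{\lambda(n)}{\lambda(n-1)}>1$ into a uniform bound on the coefficient: choose $c$ with $1<c<\liminf_n\frac{\lambda(n)}{\lambda(n-1)}$; then there is an $N$ such that $\frac{\lambda(n)}{\lambda(n-1)}\ge c$, equivalently $\frac{\lambda(n)}{\lambda(n)-\lambda(n-1)}\le\frac{c}{c-1}=:K$, for all $n\ge N$. Hence $s_n\le K\,t_n$ for every $n\ge N$. Now fix $\varepsilon>0$: if $n\ge N$ and $s_n\ge\varepsilon$ then $t_n\ge\varepsilon/K$, so
\begin{equation*}
\{\,n\in\mathbb{N}:s_n\ge\varepsilon\,\}\subseteq\{1,2,\dots,N-1\}\cup\{\,n\in\mathbb{N}:t_n\ge\varepsilon/K\,\}.
\end{equation*}
The first set is finite, hence in $\mathcal{I}$ by admissibility; the second is in $\mathcal{I}$ by hypothesis; so their union, and therefore the left-hand subset, lies in $\mathcal{I}$ by the ideal axioms. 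Since $\varepsilon>0$ and $x\in X$ were arbitrary, $A_k\overset{D_{\lambda}[\mathcal{I}_W]}{\longrightarrow}A$.

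The only genuinely delicate point is the bookkeeping for the finitely many indices $n<N$ where the ratio estimate may fail; this is handled precisely because an admissible ideal contains every finite set. Apart from that, the argument reduces to the single domination estimate above, so I do not expect any serious obstacle — the theorem is essentially the "strongly/ideal" shadow of the earlier $W_{C_\lambda}\Rightarrow W_{D_\lambda}$ comparison, with regularity of the two-diagonal matrix replaced by a direct inequality.
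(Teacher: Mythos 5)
Your proof is correct, and it follows the same basic strategy as the paper's: both arguments hinge on converting $\liminf_n\frac{\lambda(n)}{\lambda(n-1)}>1$ into the uniform bound $\frac{\lambda(n)}{\lambda(n)-\lambda(n-1)}\leq\frac{1+\beta}{\beta}$ and then comparing the $D_\lambda$-mean of $|d(x,A_k)-d(x,A)|$ with its $C_\lambda$-mean. Where you diverge is in the final estimate, and your version is actually the sounder one. The paper keeps the exact two-term identity $s_n=\frac{\lambda(n)}{\lambda(n)-\lambda(n-1)}t_n-\frac{\lambda(n-1)}{\lambda(n)-\lambda(n-1)}t_{n-1}$ and claims the bound $s_n<\frac{1+\beta}{\beta}\varepsilon_1-\frac{1}{\beta}\varepsilon_2$ for $n\in H$, then ``chooses'' $\varepsilon$ to equal that expression; this is problematic because the subtracted term is nonnegative and cannot be bounded \emph{below} by $\frac{1}{\beta}\varepsilon_2$ on the given set, and because the logic of fixing $\varepsilon$ after the fact is backwards. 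Your one-sided domination $0\le s_n\le\frac{\lambda(n)}{\lambda(n)-\lambda(n-1)}t_n$, obtained simply by discarding the nonnegative tail $\sum_{k\le\lambda(n-1)}a_k$, avoids the subtraction entirely and yields the clean inclusion $\{n:s_n\ge\varepsilon\}\subseteq\{1,\dots,N-1\}\cup\{n:t_n\ge\varepsilon/K\}$, with the finite exceptional set absorbed by admissibility of $\mathcal{I}$. (The paper glosses over that exceptional set by asserting the ratio bound for all $n$, which $\liminf$ alone does not give.) So: same skeleton, but your execution repairs two small defects in the published argument.
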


\begin{proof}
	If $\liminf_n\frac{\lambda(n)}{\lambda(n-1)}>1$, then there exists $\beta>0$ such that $\frac{\lambda(n)}{\lambda(n-1)}\geq 1+\beta$ for all $n\in \mathbb{N}$. We have 
	\begin{equation*}
	\frac{\lambda(n)}{\lambda(n)-\lambda(n-1)}\leq \frac{1+\beta}{\beta} \ \text{  and  } \  \frac{\lambda(n-1)}{\lambda(n)-\lambda(n-1)}\leq \frac{1}{\beta} \ .
	\end{equation*}	
	Let $\varepsilon>0$ and we define the set 
	\begin{equation*}
	H=\left\{ n\in \mathbb{N}:\frac{1}{\lambda(n)}\sum_{k=1}^{\lambda(n)}\left|d(x,A_k)-d(x,A)\right|<\varepsilon\right\}
	\end{equation*}
	for each $x\in X$. We can say that $H\in \mathcal{F}(\mathcal{I})$. So we have 
	\begin{eqnarray}
	&&\frac{1}{\lambda(n)-\lambda(n-1)}\sum_{k=\lambda(n-1)+1}^{\lambda(n)}|d(x,A_k)-d(x,A)| \nonumber \\ &&=\frac{1}{\lambda(n)-\lambda(n-1)}\left(\sum_{k=1}^{\lambda(n)}|d(x,A_k)-d(x,A)|-\sum_{k=1}^{\lambda(n-1)}|d(x,A_k)-d(x,A)|\right) \nonumber \\
	&& =\frac{\lambda(n)}{\lambda(n)-\lambda(n-1)}\left(\frac{1}{\lambda(n)} \sum_{k=1}^{\lambda(n)}|d(x,A_k)-d(x,A)|\right) \nonumber \\
	&&- \frac{\lambda(n-1)}{\lambda(n)-\lambda(n-1)}\left(\frac{1}{\lambda(n-1)} \sum_{k=1}^{\lambda(n-1)}|d(x,A_k)-d(x,A)|\right) \nonumber \\
	&&< \frac{1+\beta}{\beta}\varepsilon_1 - \frac{1}{\beta}\varepsilon_2  \nonumber
	\end{eqnarray}
	for each $x\in X$ and $n\in H$. Choose $\varepsilon= \frac{1+\beta}{\beta}\varepsilon_1 - \frac{1}{\beta}\varepsilon_2$. Hence, for each $x\in X$
	\begin{equation*}
	\left\{ n\in \mathbb{N} : \frac{1}{\lambda(n)-\lambda(n-1)}\sum_{k=\lambda(n-1)+1}^{\lambda(n)}|d(x,A_k)-d(x,A)|<\varepsilon \right\} \in \mathcal{F}(\mathcal{I}) \  .
	\end{equation*}
	This completes the proof. 
\end{proof}

\begin{theorem}
	Let $E=\{\lambda(n)\}$ be an infinite subset of $\mathbb{N}$ with $\lambda(0)=0$. Then
	\begin{equation*}
	A_k \overset{D_{\lambda}[\mathcal{I}_W] }{ \longrightarrow} A  \  \Rightarrow \  	A_k \overset{C_{\lambda}[\mathcal{I}_W] }{ \longrightarrow} A  \ .
	\end{equation*}
\end{theorem}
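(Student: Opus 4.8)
The plan is to imitate the proof of the non-ideal implication $A_{k}\overset{W_{D_{\lambda}}}{\longrightarrow}A\Rightarrow A_{k}\overset{W_{C_{\lambda}}}{\longrightarrow}A$, replacing the use of ordinary convergence of a regular matrix transform by its $\mathcal{I}$-analogue. Fix $x\in X$ and put
\[
\psi_{n}=\frac{1}{\lambda(n)-\lambda(n-1)}\sum_{k=\lambda(n-1)+1}^{\lambda(n)}\bigl|d(x,A_{k})-d(x,A)\bigr|,\qquad \phi_{n}=\frac{1}{\lambda(n)}\sum_{k=1}^{\lambda(n)}\bigl|d(x,A_{k})-d(x,A)\bigr|.
\]
Grouping the block $1\le k\le\lambda(n)$ along the intervals $(\lambda(j-1),\lambda(j)]$ and using $\lambda(0)=0$, the same telescoping computation as before yields $\phi_{n}=\sum_{k=1}^{n}t_{nk}\psi_{k}$, where $t_{nk}=\frac{\lambda(k)-\lambda(k-1)}{\lambda(n)}$ for $1\le k\le n$ and $t_{nk}=0$ otherwise. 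The matrix $T=(t_{nk})$ is nonnegative, has row sums $\sum_{k}t_{nk}=1$, and satisfies $\lim_{n}t_{nk}=0$ for each fixed $k$, hence is regular. Thus the statement reduces to showing that $T$ carries the $\mathcal{I}$-null sequence $\{\psi_{n}\}$ to the $\mathcal{I}$-null sequence $\{\phi_{n}\}$, for every $x\in X$.

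I would carry this out through the filter $\mathcal{F}(\mathcal{I})$. Let $\varepsilon>0$. By hypothesis $Q:=\{k\in\mathbb{N}:\psi_{k}\ge\varepsilon\}\in\mathcal{I}$, so $P:=\mathbb{N}\setminus Q\in\mathcal{F}(\mathcal{I})$. For each $n$, splitting the sum over $P$ and over $Q$ and using the row-sum identity,
\[
\phi_{n}=\sum_{\substack{k\le n\\ k\in P}}t_{nk}\psi_{k}+\sum_{\substack{k\le n\\ k\in Q}}t_{nk}\psi_{k}<\varepsilon\sum_{k\le n}t_{nk}+\sum_{\substack{k\le n\\ k\in Q}}t_{nk}\psi_{k}=\varepsilon+\sum_{\substack{k\le n\\ k\in Q}}t_{nk}\psi_{k}.
\]
It then suffices to show that $\{n:\sum_{k\le n,\,k\in Q}t_{nk}\psi_{k}\ge\varepsilon\}\in\mathcal{I}$, for then $\{n:\phi_{n}\ge 2\varepsilon\}\in\mathcal{I}$, and since $x$ and $\varepsilon$ are arbitrary this is precisely $A_{k}\overset{C_{\lambda}[\mathcal{I}_{W}]}{\longrightarrow}A$.

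The main obstacle is exactly that last estimate, i.e. controlling the contribution of the exceptional set $Q\in\mathcal{I}$. If $Q$ is finite it is immediate, since $\lim_{n}t_{nk}=0$ forces $\sum_{k\le n,\,k\in Q}t_{nk}\psi_{k}\to 0$, so the offending set of $n$ is finite and hence in $\mathcal{I}$ by admissibility. If $Q$ is infinite one cannot bound $\psi_{k}$ there without extra input, and I would dispose of it in one of two ways: either (i) work under the standing boundedness assumption $\{A_{k}\}\in L_{\infty}$ of the earlier results, so that $\psi_{k}\le 2\alpha$ for some $\alpha>0$ and all $k$, reducing the tail to $2\alpha\sum_{k\le n,\,k\in Q}t_{nk}$ and then using that $Q$ carries vanishing column mass for $T$; or (ii) invoke that the regular matrix $T$ is $\mathcal{I}$-regular, i.e. $\{n:\sum_{k\le n,\,k\in Q}t_{nk}\ge\delta\}\in\mathcal{I}$ for every $Q\in\mathcal{I}$ and $\delta>0$, which is the $\mathcal{I}$-version of the Silverman--Toeplitz conditions. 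Modulo this point the split above completes the argument, closely paralleling the non-ideal implication already obtained via the same matrix $T$.
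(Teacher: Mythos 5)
Your reduction is exactly the one the paper uses: write the $C_{\lambda}$ average $\phi_{n}$ as the regular matrix transform $\phi_{n}=\sum_{k\le n}t_{nk}\psi_{k}$ of the block averages $\psi_{k}$, with $t_{nk}=\frac{\lambda(k)-\lambda(k-1)}{\lambda(n)}$. But the ``main obstacle'' you isolate --- controlling $\sum_{k\le n,\,k\in Q}t_{nk}\psi_{k}$ for the exceptional set $Q=\{k:\psi_{k}\ge\varepsilon\}\in\mathcal{I}$ --- is precisely the step the paper's own proof gets wrong rather than resolves. After writing $\phi_{n}=\sum_{k=1}^{n}t_{nk}t_{k}$ (their $t_{j}$ is your $\psi_{j}$), the paper bounds this convex combination by $\sup_{j\in G}t_{j}<\varepsilon_{1}$, where $G$ is the set of indices with small block average; that estimate is valid only if every $j\le n$ lies in $G$, and the indices in $\mathbb{N}\setminus G\in\mathcal{I}$ are simply dropped. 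So you should not expect to find the missing argument there: your proposal reproduces the paper's proof up to, and including, its gap, and is more honest in flagging it.

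Moreover the gap cannot be closed without extra hypotheses, so neither of your proposed repairs is dispensable. Without boundedness the implication is false: take $\lambda(n)=n$, $X=\mathbb{R}$, $A=\{0\}$, $\mathcal{I}$ the ideal of density-zero sets, and $A_{k}=\{0\}$ except $A_{k}=\{k2^{k}\}$ for $k\in Q=\{2^{j}:j\in\mathbb{N}\}$; then $\{k:|d(x,A_{k})-d(x,A)|\ge\varepsilon\}\subseteq Q\in\mathcal{I}$, so the sequence is Wijsman strongly $\mathcal{I}$-$D_{\lambda}$ summable to $A$, while $\frac{1}{n}\sum_{k\le n}d(0,A_{k})\to\infty$, so the set where this average exceeds $1$ is cofinite and not in $\mathcal{I}$. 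Even for bounded sequences one needs your condition (ii), namely that $\sum_{k\le n,\,k\in Q}t_{nk}$ is $\mathcal{I}$-small for every $Q\in\mathcal{I}$: this holds for the density ideal with $\lambda(n)=n$, but fails, e.g., for the admissible ideal generated by the finite sets together with the even numbers (take $A_{k}=\{0\}$ for odd $k$, $A_{k}=\{1\}$ for even $k$, $A=\{0\}$). So a correct statement requires $\{A_{k}\}\in L_{\infty}$ together with an $\mathcal{I}$-regularity assumption on $T$ (or a restriction on $\mathcal{I}$) of exactly the kind you describe; as stated, the theorem and the paper's proof of it are both defective at the point you identified.
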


\begin{proof}
	Let $		A_k \overset{D_{\lambda}[\mathcal{I}_W] }{ \longrightarrow} A  $ and we define the sets $G$ and $V$ such that
	\begin{equation*}
	G=\left\{n\in\mathbb{N}:\frac{1}{\lambda(n)-\lambda(n-1)}\sum_{k=\lambda(n-1)+1}^{\lambda(n)}|d(x,A_k)-d(x,A)|<\varepsilon_1 \right\}
	\end{equation*}
	and 
	\begin{equation*}
	V=\left\{n\in\mathbb{N}:\frac{1}{\lambda(n)}\sum_{k=1}^{\lambda(n)}|d(x,A_k)-d(x,A)|<\varepsilon_2 \right\} \ ,
	\end{equation*}
	for every $\varepsilon_1, \varepsilon_2>0$ and for each $x\in X$. Let 
	\begin{equation*}
	t_j=\frac{1}{\lambda(j)-\lambda(j-1)}\sum_{k=\lambda(j-1)+1}^{\lambda(j)}|d(x,A_k)-d(x,A)|<\varepsilon_1 
	\end{equation*}
	for $x\in X$ and for all $j\in G$. Clearly $G\in \mathcal{F}(\mathcal{I})$. 
	\begin{eqnarray}
	&&\frac{1}{\lambda(n)}\sum_{k=1}^{\lambda(n)}|d(x,A_k)-d(x,A)| \nonumber \\
	&=& \frac{1}{\lambda(n)}\left(\sum_{k=1}^{\lambda(1)}|d(x,A_k)-d(x,A)|+\sum_{k=\lambda(1)+1}^{\lambda(2)}|d(x,A_k)-d(x,A)|\right. \nonumber \\
	&&\left.+\ldots + \sum_{k=\lambda(n-1)+1}^{\lambda(n)}|d(x,A_k)-d(x,A)|\right)  \nonumber \\
	&=& \frac{\lambda(1)-\lambda(0)}{\lambda(n)}\left(\frac{1}{\lambda(1)-\lambda(0)}\sum_{k=1}^{\lambda(1)}|d(x,A_k)-d(x,A)| \right)\nonumber \\
	&&+\frac{\lambda(2)-\lambda(1)}{\lambda(n)}\left(\frac{1}{\lambda(2)-\lambda(1)}\sum_{k=\lambda(1)+1}^{\lambda(2)}|d(x,A_k)-d(x,A)|\right) \nonumber \\
	&&+\ldots + \frac{\lambda(n)-\lambda(n-1)}{\lambda(n)}\left(\frac{1}{\lambda(n)-\lambda(n-1)}\sum_{k=\lambda(n-1)+1}^{\lambda(n)}|d(x,A_k)-d(x,A)|\right)  \nonumber \\
	&=& \frac{\lambda(1)-\lambda(0)}{\lambda(n)}t_1+ \frac{\lambda(2)-\lambda(1)}{\lambda(n)}t_2+\ldots +\frac{\lambda(n)-\lambda(n-1)}{\lambda(n)}t_n \nonumber \\
	&\leq& \left(  \underset{j\in T}{\sup} \ t_j\right)<\varepsilon_1 \nonumber 
	\end{eqnarray}
	for each $x\in X$. By choosing $\varepsilon_2=\varepsilon_1$, we obtain $V\in \mathcal{F}(\mathcal{I})$. So, $	A_k \overset{C_{\lambda}[\mathcal{I}_W] }{ \longrightarrow} A  $. 
\end{proof}

\begin{theorem}
	Let $E=\{\lambda(n)\}$ be an infinite subset of $\mathbb{N}$ with $\lambda(0)=0$. If $\limsup_n\frac{\lambda(n)}{\lambda(n-1)}<\infty$, then 
	\begin{equation*}
	A_k \overset{D_{\lambda}[\mathcal{I}_W] }{ \longrightarrow} A  \  \Rightarrow \  	A_k \overset{C_{1}[\mathcal{I}_W] }{ \longrightarrow} A  \ .
	\end{equation*}
\end{theorem}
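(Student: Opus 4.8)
The plan is to deduce this from the block comparison already used in this section together with the preceding theorem. Fix $x\in X$ and abbreviate $a_k=|d(x,A_k)-d(x,A)|\ge 0$. Since $\limsup_n\frac{\lambda(n)}{\lambda(n-1)}<\infty$, choose $B\in(1,\infty)$ with $\frac{\lambda(n)}{\lambda(n-1)}\le B$ for all $n\ge 2$; the finitely many remaining ratios, and the initial block $(\lambda(0),\lambda(1)]=\{1,\dots,\lambda(1)\}$, will only ever contribute a finite set of indices, which lies in $\mathcal{I}$ since $\mathcal{I}$ is admissible. The key estimate is that for $n\ge 2$ and any $m$ with $\lambda(n-1)<m\le\lambda(n)$,
\[
\frac{1}{m}\sum_{k=1}^{m}a_k\;\le\;\frac{1}{\lambda(n-1)}\sum_{k=1}^{\lambda(n)}a_k\;=\;\frac{\lambda(n)}{\lambda(n-1)}\cdot\frac{1}{\lambda(n)}\sum_{k=1}^{\lambda(n)}a_k\;\le\;B\,\frac{1}{\lambda(n)}\sum_{k=1}^{\lambda(n)}a_k,
\]
using $m>\lambda(n-1)$ and $a_k\ge 0$.

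Next I would invoke the previous theorem: from $A_k \overset{D_{\lambda}[\mathcal{I}_W]}{\longrightarrow} A$ we already have $A_k \overset{C_{\lambda}[\mathcal{I}_W]}{\longrightarrow} A$, i.e.\ for every $\delta>0$ the set $\{n\in\mathbb{N}:\frac{1}{\lambda(n)}\sum_{k=1}^{\lambda(n)}a_k\ge\delta\}$ lies in $\mathcal{I}$. Fix $\varepsilon>0$ and apply this with $\delta=\varepsilon/B$; call the resulting $\mathcal{I}$-set $P$. By the displayed estimate, for every $m>\lambda(1)$ with $\frac{1}{m}\sum_{k=1}^{m}a_k\ge\varepsilon$ the index $n$ of the block containing $m$ (which is $\ge 2$) belongs to $P$. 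Hence
\[
\left\{m\in\mathbb{N}:\frac{1}{m}\sum_{k=1}^{m}a_k\ge\varepsilon\right\}\;\subseteq\;\{1,\dots,\lambda(1)\}\cup\bigcup_{n\in P}\bigl(\lambda(n-1),\lambda(n)\bigr].
\]
If the right-hand side belongs to $\mathcal{I}$, then so does the left-hand side by the hereditary property of $\mathcal{I}$, and since $x\in X$ was arbitrary this yields $A_k \overset{C_{1}[\mathcal{I}_W]}{\longrightarrow} A$.

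The step I expect to carry the real weight is showing $\bigcup_{n\in P}(\lambda(n-1),\lambda(n)]\in\mathcal{I}$. Every single block $(\lambda(n-1),\lambda(n)]$ is finite, hence in $\mathcal{I}$, and $P\in\mathcal{I}$, but an ideal need not be closed under infinite unions of its members, so this is not formal. This is exactly where $\limsup_n\frac{\lambda(n)}{\lambda(n-1)}<\infty$ has to be used: it keeps consecutive blocks comparable in length, so that an index set which is $\mathcal{I}$-small at the block scale should still sweep out an $\mathcal{I}$-small set of integers. I would make this precise by comparing, for a threshold $N$, the portion of $\bigcup_{n\in P}(\lambda(n-1),\lambda(n)]$ below $N$ with $P$ restricted to block indices $\le\nu(N)$ (the block containing $N$), exploiting that $N\le\lambda(\nu(N))\le B\,\lambda(\nu(N)-1)<B N$, so that the two scales are tied together up to the factor $B$. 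This comparison — rather than any of the algebra above — is where I expect the substantive work, and the point at which the hypothesis on $\lambda$ is genuinely needed.
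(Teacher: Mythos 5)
Your reduction is sound as far as it goes, and it is arguably cleaner than the paper's own computation: the paper bounds $\frac{1}{n}\sum_{k\le n}a_k$ by $\frac{1}{\lambda(m-1)}\sum_{k\le\lambda(m)}a_k$, expands this into block averages, and then bounds the result by $\bigl(\sup_{j\in T}t_j\bigr)\frac{\lambda(m)}{\lambda(m-1)}$ --- an estimate that is itself not justified there, since the expansion runs over all blocks $j\le m$ while only the blocks with $j\in G$ are known to satisfy $t_j<\varepsilon_1$. Your detour through the already-established implication $D_\lambda[\mathcal{I}_W]\Rightarrow C_\lambda[\mathcal{I}_W]$ sidesteps that particular defect. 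However, the step you defer as ``the substantive work'' is not a postponed technicality: it is the entire content of the theorem, it is exactly the point at which the paper's proof simply asserts ``we obtain $V\in\mathcal{F}(\mathcal{I})$'' with no argument, and it cannot be carried out for an arbitrary admissible ideal.

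To see that the transfer from block indices to integer indices genuinely fails, take $\mathcal{I}$ to be the ideal of density-zero sets, $\lambda(n)=2^n$ (so $\limsup_n\lambda(n)/\lambda(n-1)=2<\infty$), and $a_k=1$ for $k$ in the $n$-th block $(2^{n-1},2^n]$ when $n$ is a perfect square, $a_k=0$ otherwise (realized by $A_k=\{1\}$ or $A_k=\{0\}$ in $X=\mathbb{R}$, $A=\{0\}$, tested at $x=2$). Each block average $t_n$ is $0$ or $1$, and $\{n:t_n\ge\varepsilon\}=\{i^2\}$ has density zero, so $A_k\overset{D_{\lambda}[\mathcal{I}_W]}{\longrightarrow}A$. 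Yet for $m=2^{i^2-1}+j$ with $j\ge 2^{i^2-1}/3$ one has $\frac1m\sum_{k\le m}a_k\ge\frac{j}{2^{i^2-1}+j}\ge\frac14$, so the set $\{m:\frac1m\sum_{k\le m}a_k\ge\frac14\}$ contains about $\frac13\cdot 2^{i^2}$ integers below $2^{i^2}$ and hence has upper density at least $\frac13$; it is not in $\mathcal{I}$. So not only is $\bigcup_{n\in P}(\lambda(n-1),\lambda(n)]\notin\mathcal{I}$ in general --- the conclusion of the theorem itself fails for this ideal. The reason the ratio hypothesis does not tie the two scales together in the way your last paragraph hopes is that the block with index $n$ occupies a fixed positive fraction of the initial segment $[1,\lambda(n)]$, so even a very sparse ($\mathcal{I}$-small) set of block indices can sweep out an $\mathcal{I}$-large set of integers. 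The implication is correct when $\mathcal{I}$ is the ideal of finite sets (your estimate plus regularity of the block-averaging matrix finishes it), but for a general admissible ideal the gap you identified is fatal, and the paper's proof does not repair it.
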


\begin{proof}
	Let $		A_k \overset{D_{\lambda}[\mathcal{I}_W] }{ \longrightarrow} A  $ and we define the sets $G$ and $V$ such that
	\begin{equation*}
	G=\left\{m\in\mathbb{N}:\frac{1}{\lambda(m)-\lambda(m-1)}\sum_{k=\lambda(m-1)+1}^{\lambda(m)}|d(x,A_k)-d(x,A)|<\varepsilon_1 \right\}
	\end{equation*}
	and 
	\begin{equation*}
	V=\left\{n\in\mathbb{N}:\frac{1}{n}\sum_{k=1}^{n}|d(x,A_k)-d(x,A)|<\varepsilon_2 \right\} \ ,
	\end{equation*}
	for every $\varepsilon_1, \varepsilon_2>0$ and for each $x\in X$. Let 
	\begin{equation*}
	t_j=\frac{1}{\lambda(j)-\lambda(j-1)}\sum_{k=\lambda(j-1)+1}^{\lambda(j)}|d(x,A_k)-d(x,A)|<\varepsilon_1 
	\end{equation*}
	for $x\in X$ and for all $j\in G$. Clearly $G\in \mathcal{F}(\mathcal{I})$. Choose an integer $\lambda(m-1)< n< \lambda(m)$ for $m\in G$. 
	\begin{eqnarray}
	&&\frac{1}{n}\sum_{k=1}^{n}|d(x,A_k)-d(x,A)| \nonumber \\
	&\leq& \frac{1}{\lambda(m-1)}\left(\sum_{k=1}^{\lambda(1)}|d(x,A_k)-d(x,A)|+\sum_{k=\lambda(1)+1}^{\lambda(2)}|d(x,A_k)-d(x,A)|\right. \nonumber \\
	&&\left.+\ldots + \sum_{k=\lambda(m-1)+1}^{\lambda(m)}|d(x,A_k)-d(x,A)|\right)  \nonumber \\
	&=& \frac{\lambda(1)-\lambda(0)}{\lambda(m-1)}\left(\frac{1}{\lambda(1)-\lambda(0)}\sum_{k=1}^{\lambda(1)}|d(x,A_k)-d(x,A)| \right)\nonumber \\
	&&+\frac{\lambda(2)-\lambda(1)}{\lambda(m-1)}\left(\frac{1}{\lambda(2)-\lambda(1)}\sum_{k=\lambda(1)+1}^{\lambda(2)}|d(x,A_k)-d(x,A)|\right) \nonumber \\
	&&+\ldots + \frac{\lambda(m)-\lambda(m-1)}{\lambda(m-1)}\left(\frac{1}{\lambda(m)-\lambda(m-1)}\sum_{k=\lambda(m-1)+1}^{\lambda(m)}|d(x,A_k)-d(x,A)|\right)  \nonumber \\
	&=& \frac{\lambda(1)-\lambda(0)}{\lambda(m-1)}t_1+ \frac{\lambda(2)-\lambda(1)}{\lambda(m-1)}t_2+\ldots +\frac{\lambda(m)-\lambda(m-1)}{\lambda(m-1)}t_m \nonumber \\
	&\leq& \left(  \underset{j\in T}{\sup} \ t_j\right)  \frac{\lambda(m)}{\lambda(m-1)}   \nonumber 
	\end{eqnarray}
	for each $x\in X$. Since $\limsup_n\frac{\lambda(n)}{\lambda(n-1)}<\infty$, we obtain $V\in \mathcal{F}(\mathcal{I})$. So, $	A_k \overset{C_{1}[\mathcal{I}_W] }{ \longrightarrow} A  $. 
\end{proof}

We now examine the relatonship between Wijsman $\mathcal{I}$-$C_{\lambda}$ statistical convergence and Wijsman $p$-strongly $\mathcal{I}$-$C_{\lambda}$ summability. 

\begin{definition}
Let	$\lambda=\{ \lambda(n) \}$ be an increasing sequence of $\mathbb{N}$ and $\{A_k\}$ be a set sequence.	The sequence $\{A_k\}$ is Wijsman $\mathcal{I}$-$C_{\lambda}$ statistical convergent to $A$ if for every $\varepsilon >0$, $\delta>0$ and for each $x\in X$, 
	\begin{equation*}
	\left\{  n\in \mathbb{N}: \frac{1}{\lambda(n)}\left| \left\{ k\leq \lambda(n) : |d(x,A_k)-d(x,A)|\geq \varepsilon  \right\}\right|\geq \delta   \right\}	\in \mathcal{I} \  .
	\end{equation*}
In this case we write $C_{\lambda}(\mathcal{I}_W)st-\lim A_k =A$ (or $ 	A_k \overset{st-C_{\lambda}(\mathcal{I}_W) }{ \longrightarrow} A  $) .
\end{definition}

\begin{definition}
Let	$\lambda=\{ \lambda(n) \}$ be an increasing sequence of $\mathbb{N}$ and $\{A_k\}$ be a set sequence.	The sequence $\{A_k\}$ is Wijsman $p$-strongly $\mathcal{I}$-$C_{\lambda}$ summable  to $A$ if for every $\varepsilon >0$,  for each $p$ positive real number and for each $x\in X$, 
	\begin{equation*}
	\left\{ n\in \mathbb{N} : \frac{1}{\lambda(n)}\sum_{k=1}^{\lambda(n)}\left|d(x,A_k)-d(x,A)\right|^p\geq \varepsilon  \right\}\in \mathcal{I} \  .
	\end{equation*}
\end{definition}

\begin{theorem}
	The sequence $\{A_k\}$ is  Wijsman $p$-strongly $\mathcal{I}$-$C_{\lambda}$ summable  to $A$ then it is Wijsman $\mathcal{I}$-$C_{\lambda}$ statistical convergent to $A$. 
\end{theorem}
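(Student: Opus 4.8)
The plan is to derive the Wijsman $\mathcal{I}$-$C_\lambda$ statistical convergence from the $p$-strongly $\mathcal{I}$-$C_\lambda$ summability hypothesis by the classical comparison: the strong $p$-Cesàro average over $k \le \lambda(n)$ of $|d(x,A_k)-d(x,A)|^p$ dominates (up to a factor $\varepsilon^p/\lambda(n)$) the count of indices $k \le \lambda(n)$ for which $|d(x,A_k)-d(x,A)| \ge \varepsilon$. This is exactly the inequality underlying Connor's theorem relating strong Cesàro summability and statistical convergence, carried over to the Wijsman setting and combined with the ideal.

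First I would fix $x \in X$, $\varepsilon > 0$, $\delta > 0$ and $p > 0$. For each $n$, split the sum $\sum_{k=1}^{\lambda(n)} |d(x,A_k)-d(x,A)|^p$ by restricting to the index set $K_n(\varepsilon) = \{k \le \lambda(n) : |d(x,A_k)-d(x,A)| \ge \varepsilon\}$, obtaining
\begin{equation*}
\frac{1}{\lambda(n)}\sum_{k=1}^{\lambda(n)}\left|d(x,A_k)-d(x,A)\right|^p \ \ge\ \frac{1}{\lambda(n)}\sum_{k \in K_n(\varepsilon)}\left|d(x,A_k)-d(x,A)\right|^p \ \ge\ \varepsilon^p \cdot \frac{|K_n(\varepsilon)|}{\lambda(n)}.
\end{equation*}
Hence, for every $n$, if $\frac{1}{\lambda(n)}|K_n(\varepsilon)| \ge \delta$ then $\frac{1}{\lambda(n)}\sum_{k=1}^{\lambda(n)}|d(x,A_k)-d(x,A)|^p \ge \varepsilon^p \delta$. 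This gives the set inclusion
\begin{equation*}
\left\{ n \in \mathbb{N} : \frac{1}{\lambda(n)}\left|K_n(\varepsilon)\right| \ge \delta \right\} \ \subseteq\ \left\{ n \in \mathbb{N} : \frac{1}{\lambda(n)}\sum_{k=1}^{\lambda(n)}\left|d(x,A_k)-d(x,A)\right|^p \ge \varepsilon^p \delta \right\}.
\end{equation*}

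Next I would invoke the hypothesis: since $\{A_k\}$ is Wijsman $p$-strongly $\mathcal{I}$-$C_\lambda$ summable to $A$, the right-hand set belongs to $\mathcal{I}$ (applying the definition with the threshold $\varepsilon^p\delta$ in place of $\varepsilon$). By the hereditary property \textit{iii)} of an ideal, every subset of a member of $\mathcal{I}$ is again in $\mathcal{I}$, so the left-hand set lies in $\mathcal{I}$ as well. Since $x$, $\varepsilon$, $\delta$ were arbitrary, this is precisely the statement that $\{A_k\}$ is Wijsman $\mathcal{I}$-$C_\lambda$ statistical convergent to $A$, which completes the argument.

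There is no real obstacle here; the only point requiring a little care is the bookkeeping with thresholds — one must apply the $p$-strong summability definition with the constant $\varepsilon^p\delta$, not $\varepsilon$, and note that $\varepsilon^p\delta > 0$ so the definition does apply. The estimate itself is elementary and holds termwise in $n$, so no summation by parts, regularity of matrices, or boundedness assumption on $\{A_k\}$ is needed.
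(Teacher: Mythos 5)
Your proposal is correct and follows essentially the same route as the paper's own proof: the termwise estimate $\frac{1}{\lambda(n)}\sum_{k\le\lambda(n)}|d(x,A_k)-d(x,A)|^p \ge \varepsilon^p\,\frac{|K_n(\varepsilon)|}{\lambda(n)}$, the resulting set inclusion with threshold $\varepsilon^p\delta$, and the hereditary property of the ideal. No differences worth noting.
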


\begin{proof}
	Let $\{A_k\}$ be Wijsman $p$-strongly $\mathcal{I}$-$C_{\lambda}$ summable  to $A$  and  given $\varepsilon>0$. Then we have 
	\begin{eqnarray}
	\sum_{k=1}^{\lambda(n)}|d(x,A_k)-d(x,A)|^p&&\geq \sum_{\substack{
			k=1,\\ |d(x,A_k)-d(x,A)|\geq \varepsilon} }^{\lambda(n)}|d(x,A_k)-d(x,A)|^p \nonumber \\
	&&\geq \varepsilon^p |\{k\leq \lambda(n) :  |d(x,A_k)-d(x,A)|\geq \varepsilon \}| \nonumber
	\end{eqnarray}	
	for each $x\in X$  and so 
	\begin{eqnarray}
	\frac{1}{\varepsilon^p  \lambda(n)}\sum_{k=1}^{\lambda(n)}|d(x,A_k)-d(x,A)|^p\geq \frac{1}{  \lambda(n)} |\{k\leq \lambda(n) :  |d(x,A_k)-d(x,A)|\geq \varepsilon \}| \ .\nonumber
	\end{eqnarray}	
	
	Hence, for given $\delta>0$ 
	\begin{eqnarray}
	&&\left\{n\in\mathbb{N} :  \frac{1}{  \lambda(n)} \left|\{k\leq \lambda(n) :  |d(x,A_k)-d(x,A)|\geq \varepsilon \}\right|\geq \delta  \right\}  \nonumber  \\
	&& \subseteq \left\{n\in\mathbb{N} :  \frac{1}{  \lambda(n)} \sum_{k=1}^{\lambda(n)}|d(x,A_k)-d(x,A)|^p\geq \varepsilon^p \delta  \right\} \in \mathcal{I} , \nonumber 
	\end{eqnarray} 
	for each $x\in X$. Therefore, $\{A_k\}$ is Wijsman $\mathcal{I}$-$C_{\lambda}$ statistical convergent to $A$. 
\end{proof}

\begin{theorem}
	Let the sequence $\{A_k\}$ be bounded. If $\{A_k\}$ is Wijsman $\mathcal{I}$-$C_{\lambda}$ statistical convergent to $A$ then it is Wijsman $p$-strongly $\mathcal{I}$-$C_{\lambda}$ summable  to $A$ 
\end{theorem}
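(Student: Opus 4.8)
The plan is to run the same splitting argument that appears in the companion theorem (the $p$-strong $\Rightarrow$ statistical direction), but now in reverse, using boundedness to dominate the "large'' part of the $p$-Ces\`aro mean. First I would fix $x\in X$. Since $\{A_k\}\in L_{\infty}$ we have $\sup_k d(x,A_k)<\infty$, and $d(x,A)$ is a finite real number, so there is a constant $M=M(x)>0$ with $|d(x,A_k)-d(x,A)|\le M$ for every $k$ (the case $M=0$ being trivial). Then I would fix an arbitrary $\varepsilon>0$ and an arbitrary real $p>0$; these are the parameters occurring in the definition of Wijsman $p$-strong $\mathcal{I}$-$C_{\lambda}$ summability.

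Next, introduce an auxiliary parameter $\varepsilon_{1}>0$ and, for each $n$, split the sum $\sum_{k=1}^{\lambda(n)}|d(x,A_k)-d(x,A)|^{p}$ according to whether $|d(x,A_k)-d(x,A)|\ge\varepsilon_{1}$ or $|d(x,A_k)-d(x,A)|<\varepsilon_{1}$. On the first index set each term is at most $M^{p}$; on the second, each term is less than $\varepsilon_{1}^{p}$ and there are at most $\lambda(n)$ of them. This gives
\begin{equation*}
\frac{1}{\lambda(n)}\sum_{k=1}^{\lambda(n)}|d(x,A_k)-d(x,A)|^{p}\le\frac{M^{p}}{\lambda(n)}\,|\{k\le\lambda(n):|d(x,A_k)-d(x,A)|\ge\varepsilon_{1}\}|+\varepsilon_{1}^{p}.
\end{equation*}
Now I would choose $\varepsilon_{1}$ so small that $\varepsilon_{1}^{p}<\varepsilon/2$ (possible since $p>0$), and set $\delta=\varepsilon/(2M^{p})$. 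From the displayed estimate, whenever the left-hand side is $\ge\varepsilon$ we must have $\frac{1}{\lambda(n)}|\{k\le\lambda(n):|d(x,A_k)-d(x,A)|\ge\varepsilon_{1}\}|\ge\delta$, so
\begin{equation*}
\left\{n\in\mathbb{N}:\frac{1}{\lambda(n)}\sum_{k=1}^{\lambda(n)}|d(x,A_k)-d(x,A)|^{p}\ge\varepsilon\right\}\subseteq\left\{n\in\mathbb{N}:\frac{1}{\lambda(n)}\,|\{k\le\lambda(n):|d(x,A_k)-d(x,A)|\ge\varepsilon_{1}\}|\ge\delta\right\}.
\end{equation*}
By the hypothesis that $\{A_k\}$ is Wijsman $\mathcal{I}$-$C_{\lambda}$ statistically convergent to $A$, the set on the right belongs to $\mathcal{I}$, and since $\mathcal{I}$ is closed under subsets (equivalently, the corresponding set lies in $\mathcal{F}(\mathcal{I})$), so does the set on the left. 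As $x\in X$, $\varepsilon>0$ and $p>0$ were arbitrary, this is precisely the assertion that $\{A_k\}$ is Wijsman $p$-strongly $\mathcal{I}$-$C_{\lambda}$ summable to $A$.

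I do not expect any substantial obstacle here: the argument is the routine reversal of the previous theorem. The two points that merit a little care are (i) that the bound $M$ depends on $x$, which is harmless because all definitions quantify over $x$ only after the other parameters are fixed, and (ii) that $p$ need not be $\ge 1$, so the estimate on the "small'' index set should be read off directly from $|d(x,A_k)-d(x,A)|<\varepsilon_{1}\Rightarrow|d(x,A_k)-d(x,A)|^{p}<\varepsilon_{1}^{p}$, without appealing to any convexity of $t\mapsto t^{p}$. Everything else is the same bookkeeping with the ideal $\mathcal{I}$ and its dual filter $\mathcal{F}(\mathcal{I})$ already used in the earlier proofs of this section.
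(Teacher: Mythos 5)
Your proof is correct and follows essentially the same route as the paper: split $\frac{1}{\lambda(n)}\sum_{k=1}^{\lambda(n)}|d(x,A_k)-d(x,A)|^{p}$ into the indices where the deviation is at least some threshold (bounded via the boundedness constant times the counting function) and the rest (bounded by the threshold to the $p$-th power), then pass the resulting set inclusion through the ideal $\mathcal{I}$. If anything, your bookkeeping with the auxiliary threshold $\varepsilon_{1}$ and the choice $\delta=\varepsilon/(2M^{p})$ is slightly more careful than the paper's final displayed inclusion, which uses the same $\varepsilon$ for both roles and states the threshold as $\delta^{p}/\alpha^{p}$ rather than something like $(\delta-\varepsilon^{p})/\alpha^{p}$.
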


\begin{proof}
	
	Suppose that $\{A_k\}$ is bounded. Then, there is an $\alpha>0$ such that $|d(x,A_k)-d(x,A)|<\alpha$ , for each $x\in X$ and for all $k$. Given $\varepsilon>0$, we have 
	
	\begin{eqnarray}
	\frac{1}{\lambda(n)}  \sum_{k=1}^{\lambda(n)}|d(x,A_k)-d(x,A)|^p&&= \frac{1}{\lambda(n)}\sum_{ \substack{
			k=1,\\ |d(x,A_k)-d(x,A)|\geq \varepsilon} }^{\lambda(n)}   |d(x,A_k)-d(x,A)|^p \nonumber \\
	&&+ \frac{1}{\lambda(n)}\sum_{ \substack{
			k=1,\\ |d(x,A_k)-d(x,A)|< \varepsilon} }^{\lambda(n)}|d(x,A_k)-d(x,A)|^p \nonumber \\
	&&\leq \frac{1}{\lambda(n)} \alpha^p  |\{k\leq \lambda(n):|d(x,A_k)-d(x,A)|\geq \varepsilon \}| \nonumber \\ 
	&&+ \frac{1}{\lambda(n)} \varepsilon^p |\{k\leq \lambda(n):|d(x,A_k)-d(x,A)|< \varepsilon \}| \nonumber \\
	&&\leq \frac{\alpha^p}{\lambda(n)}  |\{k\leq \lambda(n) :  |d(x,A_k)-d(x,A)|\geq \varepsilon \}| +\varepsilon^p . \nonumber
	\end{eqnarray}	
	
	Then, for any $\delta>0$ 
	\begin{eqnarray}
	&&\left\{n\in\mathbb{N} :  \frac{1}{  \lambda(n)} \sum_{k=1}^{\lambda(n)}|d(x,A_k)-d(x,A)|^p\geq  \delta  \right\}  \nonumber  \\
	&& \subseteq \left\{n\in\mathbb{N} :  \frac{1}{  \lambda(n)} \left|  \left\{ k\leq \lambda(n): |d(x,A_k)-d(x,A)|\geq \varepsilon \right\}  \right|\geq \frac{\delta^p}{\alpha^p}  \right\} \in \mathcal{I} , \nonumber 
	\end{eqnarray} 
	for each $x\in X$. Therefore $\{A_k\}$ is Wijsman $p$-strongly $\mathcal{I}$-$C_{\lambda}$ summable  to $A$ .	
\end{proof}

\end{document}